\documentclass{article}
\usepackage{graphicx} 
\usepackage{amsmath,amssymb,amsthm,mathtools}
\usepackage{hyperref} 
\usepackage{mathrsfs}
\usepackage{bbm}

\usepackage{xcolor}
\usepackage{listings}

\lstdefinelanguage{Sage}{
  language=Python,
  morekeywords={PolynomialRing,QQ,FractionField,Matrix}
}

\usepackage{lineno}

\newcommand{\R}{\mathbb{R}}
\def\bZ{\mathbb Z}

\newcommand{\T}{\mathbb{T}}
\newcommand{\RP}{\mathbb{R}\mathrm{P}}
\newcommand{\Free}{\operatorname{Free}}
\newcommand{\D}{\mathcal{D}}
\def\cD{\D}
\newcommand{\dd}{\partial}

\theoremstyle{plain}
\newtheorem{theorem}{Theorem}
\newtheorem{lemma}{Lemma}
\newtheorem{corollary}{Corollary}

\def\bS{\mathbb S}
\def\bT{\mathbb T}
\def\bR{\mathbb R}
\newcommand{\BF}{\bf\boldmath }

\title{Free maps in critical dimension on low-dimensional tori and closed surfaces
}

\author{Roberto De Leo\\ \small\em Department of Mathematics, Howard University, Washington, DC (USA)}
\date{\today}

\begin{document}

\maketitle

\begin{abstract}
    We present a method to build free immersions in critical dimension on $m$-tori for $m=2,3,4,5$ by using a factorization trick inspired by tori immersions in critical dimension.
    As an application, we show that the set of smooth free maps from a  closed surface $M$ to \(\R^5\) is nonempty.
    In particular, every closed surface embeds freely in \(\R^5\).
\end{abstract}

\section{Introduction}

It was shown first by John Nash in \cite{Nas56} that free maps of manifolds into Euclidean spaces play an important role in the theory of isometric immersions (e.g. see \cite{Gro86} for several fundamental results involving free maps).

\medskip\noindent
{\bf Free maps.}
Given a smooth \(m\)-dimensional manifold \(M\), set 
$$
\text{\BF$q_m$}=\frac{m(m+3)}{2}.
$$
We say that a map \(f\in C^2(M,\R^q)\) is {\bf free} if, in local coordinates \((x^\alpha)\), \(\alpha=1,\ldots,m\), on \(M\) and \((y^i)\), \(i=1,\ldots,q\), on \(\R^q\), $q\geq q_m$, the {\bf osculating matrix}
\[
\text{\BF$\D f$}=(\dd_\alpha f^i,\dd_{\alpha\beta}f^i)
\]
of the first and second derivatives of \(f\) has constant rank equal to $q_m$.
Notice that every free map is an immersion.
We denote by {\BF$\Free^k(M,\bR^q)$} the set of all free $C^k$ immersions of $M$ into $\bR^q$.

\medskip\noindent
{\bf Freeness and affine connections.}
The condition of being free is not intrinsic.
Recall indeed that, close to any point where the Jacobian of $f$ has constant rank $k$, we can find local coordinates $(x^\alpha)$ on $M$ and \((y^i)\) on $\bR^q$ where $f$ writes as the projection $(x^\alpha)\mapsto(x^1,\dots,x^k,0,\dots,0)$, which is clearly not free.
This is due to the fact that, given a second pair of charts $(x^{\alpha'})$, \((y^{i'})\), the coordinate transformation of the $\partial^2_{\alpha'\beta'}f^{i'}$ are {\em affine}, rather than {\em linear}, in the $\partial_{\alpha\beta}f^{i}$:
\begin{align*}
    \partial_{\alpha'}f^{i'}&=\partial_i y^{i'}\partial_{\alpha}f^i\partial_{\alpha'}x^\alpha,\\
    \partial_{\alpha'\beta'}f^{i'}&=\partial_i y^{i'}(\partial_{\alpha\beta}f^i\partial_{\alpha'}x^\alpha\partial_{\beta'}x^\beta+\partial_{\alpha}f^i\partial_{\alpha'\beta'}x^\alpha)+\partial_{ij}y^{i'}\partial_{\alpha}f^i\partial_{\alpha'}x^\alpha\partial_\beta f^j\partial_{\beta'}x^\beta.
\end{align*}

In order to have a combination of second derivatives that is {\em tensorial} is enough setting an affine connection $\Gamma$ on $T\bR^q$ and an affine connection $\hat\Gamma$ on $TM$.
Indeed, in this case, the differential $df$ is a section of the bundle $T^*M\otimes f^*(TN)$ and one can define the Hessian as the covariant derivative of the differential:
$$
(\nabla df)^i_{\alpha\beta} 
= 
\partial_{\alpha\beta}f^i-\hat\Gamma^\gamma_{\beta\alpha}\partial_\gamma f^i+\Gamma^i_{jk}\partial_\alpha f^j \partial_\beta f^k.
$$
Then, given any other pair of local charts $(x^{\alpha'})$, \((y^{i'})\), we have that
$$
(\nabla df)^{i'}_{\alpha'\beta'} 
= \partial_i y^{i'}
(\nabla df)^i_{\alpha\beta} \partial_{\alpha'}x^\alpha\partial_{\beta'}x^\beta.
$$
Hence, freeness is an intrinsic concept within the class of manifolds endowed with an affine connection on their tangent bundle; in particular, it is an intrinsic concept within the class of Riemannian manifolds.

\medskip\noindent
{\bf The affine connection on $M$ does not play a significant role.}
Other than showing that freeness is intrinsic, the connection on $M$ can be disregarded.
The reason is that freeness is about the span of the first and second derivatives and a change of coordinates in $M$ would leave that span unchanged. 
Since every manifold has a Riemannian metric, an affine connection exists on every manifold and so there are no obstructions on this side and we can ultimately forget about this connection.

\medskip
From a concrete viewpoint, the considerations above show that talking naively about free maps $M\to\bR^q$ makes sense as long as we allow only affine changes on $\bR^q$.
We make this assumption throughout the rest of the article.


\medskip\noindent
{\bf Existence of free maps.}
Within their work on elimination of singularities, it was shown by Yakov Eliashberg and Misha Gromov~\cite{EG71} (see also~\cite[Section 1.1.4]{Gro86}) that the partial differential relation of being a free map \(M\to \R^q\) satisfies the \(h\)-principle if either \(q>q_m\) or if \(q=q_m\) and \(M\) is open (i.e. has no compact connected component). 
In particular this means that, under those hypotheses, a sufficient condition for free maps \(M\to \R^q\) to arise is that \(M\) be parallelizable.

\medskip\noindent
{\BF A canonical free map $\bR^m\to\bR^{q_m}$}
Since $\bR^m$ is open, it has free maps into $\bR^{q_m}$.
In this case, we can easily write explicit ones.
Let \((x^\alpha)\), \(\alpha=1,\ldots,m\), be linear coordinates on \(\R^m\). 
An elementary smooth free map \(F:\R^m\to \R^{q_m}\) is given by any \(q_m\)-ple of all monomials of first and second degree in the \(x^\alpha\), e.g.
\[
F(x^1,\ldots,x^m)=\bigl(x^1,\ldots,x^m,(x^1)^2/2,\ldots,(x^m)^2/2,x^1x^2,\ldots,x^{m-1}x^m\bigr).
\]
For a convenient ordering of the derivatives we get that
\[
\D F=\begin{pmatrix}
{\mathbbm 1}_m & *\\
O_{r_m} & {\mathbbm1}_{r_m}
\end{pmatrix},
\]
where $r_m=m(m+1)/2$, so that \(\det \D F=1\). Clearly every map \(G:\R^m\to \R^q\), \(q>q_m\), which reduces to \(F\) after some projection of \(\R^q\) into \(\R^{q_m}\) is free as well.

\medskip\noindent
{\bf Free maps in critical dimension on compact manifolds.}
It was pointed out by Eliashberg and Mishachev in~\cite[Intrigue]{EM02} and by Gromov in \cite[Appendix 6]{GR70} (see also~\cite[Section 1.1.4]{Gro86} and~\cite[Section~1.2, case 5]{Gro17}), 
that it is unknown whether free maps arise in the ``critical dimension'' case (i.e. for \(q=q_m\), so that \(\D f\) is a square matrix) when \(M\) is compact, besides the well-known cases of spheres \(S^m\) and projective spaces \(\RP^m\).

\medskip\noindent
{\bf Free maps in critical dimension on $\RP^m$ and $\bS^m$.}
Consider the free embedding \(F\) of \(\R^m\) into \(\R^{q_m}\) shown in Example 1, homogenize its components by introducing a new coordinate \(x^{m+1}\) and add to it a further component \((x^{m+1})^2\). 
The resulting map
\[
f(x^1,\ldots,x^{m+1})=\bigl(x^1x^{m+1},\ldots,x^mx^{m+1},(x^{m+1})^2,(x^1)^2,\ldots,x^{m-1}x^m\bigr)
\]
sends homogeneously \(\R^{m+1}\) into \(\R^{q_m+1}\) and induces canonically the so-called (quadratic) Veronese map $\nu_2:\RP^m\longrightarrow \RP^{q_m}$ given by 
\[
\nu_2([x^1:\cdots:x^{m+1}])= [x^1x^{m+1}:\cdots:x^ix^j:\dots:x^{m-1}x^m]_{1\leq i\leq j\leq m+1}.
\]
Denote by $Y^k$ the homogeneous coordinate such that 
$$
Y^k(\nu_2([x^1:\cdots:x^{m+1}]))=(x^k)^2,\;1\leq k\leq m+1.
$$
Then, since at least one of the coordinates in $[x^1:\cdots:x^{m+1}]$ is non-zero, the image $\nu_2(\RP^m)$ does not intersect the set
$$
Y^1+\dots+Y^{m+1}=0,
$$
that is the set of points at infinity of the $q_m$-dimensional plane represented by the affine chart
$$
Y^1+\dots+Y^{m+1}\neq0.
$$
Hence, one can think of $\nu_2$ as a map to $\bR^{q_m}$.

In the affine chart $x^{m+1}=1$, this map coincides with the canonical free map from $\bR^m$ to $\bR^{q_m}$ and similarly happens on any other chart $x^k=1$. 
Since these charts cover every point of $\RP^{m}$, it follows that $\nu_2$ is free.



Observe finally that \(\nu_2\) extends uniquely to an immersion \(\hat\nu_2:S^m\to \R^{q_m}\) invariant under the antipodal map. Then also \(\hat\nu_2\in\Free(\bS^m,\R^{q_m})\) since locally \(\hat\nu_2\) and \(\nu_2\) are the same function.

\medskip\noindent
{\bf Free maps on tori.}
After spheres and projective spaces, the simplest compact manifolds are arguably the tori \(\T^m\). 
Since tori are parallelizable, the results of Eliashberg and Gromov grant the existence of free immersions 
$$
\T^m\to \R^{q_m+1}
$$
but says nothing about free immersions in critical dimension.

The 1-torus happens to coincide the 1-sphere and so, because of the example above, we know that it admits free maps in critical dimension. 
A free map \(\bS^1\to \R^q\) is just a loop whose radius of curvature is everywhere finite, so that its velocity and acceleration are always transversal.
The Veronese map in this case is equivalent to the canonical embedding of the unit circle on the plane, namely the map
\[
s(x)=(\cos x,\sin x).
\]
Indeed
\[
\D s=\begin{pmatrix}
-\sin x & \phantom{-}\cos x\\
-\cos x & -\sin x
\end{pmatrix},
\]
so that \(\det\D s=1\). 

An elementary family of free maps can be built using the embedding $s(x)$ by setting
$$
f_m(x^1,\dots,x^m) =
(s(x^i),s(x^i+x^j))_{1\leq i\leq m,1\leq j<i\leq m}
$$
This is a free map $\bT^m\to\bR^{m(m+1)}$.
Indeed, in case $m=2$, the map is 
\[
f_2(x,y)=(s(x),s(y),s(x+y))
\]
and a direct calculation, after ordering the rows of \(\D f_2\) as \((\dd_x,\dd_{xx},\dd_y,\dd_{yy},\dd_{xy})\), shows that
\[
\D f_2=\begin{pmatrix}
\D s(x)&O_2&*&*\\
O_2&\D s(y)&*&*\\
0\ 0&0\ 0&-\cos(x+y)&-\sin(x+y)
\end{pmatrix},
\]
which clearly has full rank at every point.
The case $m>2$ can be treated in a similar way.

\medskip
To the knowledge of the author, explicit expressions for free maps $\bT^m\to\bR^q$ with $q<m(m+1)$ are not available in literature.
Moreover, as pointed out above, the  existence of free maps in critical dimension on $\bT^n$ is not established in literature.

\medskip\noindent
{\bf Main results and structure of the article.}
In Section~\ref{sec: tori} we show that $\Free^\infty(\bT^m,\bR^{q_m})\neq\varnothing$ for $m=2,3,4,5$ by presenting a method to generate explicit expressions of free maps in critical dimension on $\bT^m$ for such values of $m$.
As a further application of this method, we also show that $\bT^2$ admits ``k-free'' smooth immersions, namely immersions such that its vectors of partial derivatives from order 1 up to order $k$ are linearly independent at every point, in critical dimension for $k=3,4,5,6$.

Then, in Section~\ref{sec: surfaces}, we show that the existence of free maps in critical dimension on closed surfaces $M$ and $N$ implies the existence of similar free maps on their connected sum $M\# N$ and we use the existence of free maps on $\RP^2$ and $\bT^2$ to conclude that free maps arise on any closed surface.

\medskip\noindent
{\bf A remark on computations.}
Except for the $m=2$ case,
computer-aided computations were made to ascertain the freeness of the map via Sturm's criterion (see Appendix~\ref{app: Sturm}).
Notice that, since the polynomial involved have rational coefficients and
computations have been performed in arbitrary precision symbolic arithmetic, then these results are exact.
We provide corresponding SageMath and Mathematica code as ancillary files.

\section{Free maps in critical dimension on tori}
\label{sec: tori}
The idea behind the results of this section comes from the following elementary observation about immersions (recall that $q=m+1$ is the critical dimension for the existence of immersions $\bT^m\to\bR^q$): 

\medskip
{\em for every $m=2,3,\dots$ there are immersion $\bT^m\to\bR^{m+1}$ that are obtained as the image of an immersed loop $v:\bS^1\to\bR^{m+1}$ under an action of $\bT^{m-1}\to SO(m+1)$ by successive rotations in suitable coordinate 2-planes.}

\medskip
The fundamental block of these isometric toric actions is the elementary action of $\bS^1$ on the plane given by the matrices
$$
\text{\BF$A(x)$}=
\begin{pmatrix}
\cos x & -\sin x \\
\sin x & \phantom{-}\cos x
\end{pmatrix}.
$$
For instance, the canonical embedding of $\bT^2$ in $\bR^3$ is obtained as the image of the loop
$$
v(t)=(R+r\cos t,0,r\sin t)
$$
under the action of $\bS^1$ on $\bR^3$ given by
$$
A_3(x) = \begin{pmatrix}
A(x) & 0\\
0 & 1
\end{pmatrix}.
$$
It is natural therefore to pose the following question:

\medskip
{\em are there integers $m$ for which there is a loop $\gamma:\bS^1\to\bR^{q_m}$ and a torus action $\bT^{m-1}\to SO(q_m)$ such that the image of $\gamma$ under the action of $\bT^{m-1}$ is a free immersion of $\bT^m$ into $\bR^{q_m}$?}
\medskip

Notice that the matrix $A(x)$, which is the building block of the representations of the toric actions we are considering,  coincides, modulo an orientation switch, with the osculating matrix of the map $s(x)=(\sin x,\cos x)$.

\medskip
The examples exhibited below show that the answer to this question is positive at least for $m=2,3,4,5$.
The method we use, in short, is looking for free maps $F:\bT^m\to\bR^{q_m}$ that factor as a $q_m\times q_m$ matrix $R$, depending only on the first $m-1$ variables, acting on a vector $v\in\bR^{q_m}$, depending only on the last variable.
A key point of this factorization is that the determinant of the osculating matrix of the corresponding map $F$ depends only on the last coordinate, making easy to verify whether the determinant has or not zeros.

Our explorations of the cases $m=6$ and $m=7$ were unable to find any free map in critical dimension.
This is possibly just a sign that the size of the osculating matrix for $m>5$ is too large for this method to be effective.
The simplest next step would be to find some pattern in free maps in critical dimension on low-dimensional tori that could be used to prove by induction the existence of free maps in critical dimension on $\bT^n$ for every $n$.



\subsection{A free map $\bT^2\to\bR^5$}
\label{sec: T2}
We use coordinates $(x,y)\in\bT^2$.
Define
\[
F_2:\T^2\longrightarrow \mathbb R^5
\]
by
\[
F_2(x,y)=R_2(x)v_2(y),
\]
where
\[
R_2(x)=
\begin{pmatrix}
A(x)&O_2&0&\\
O_2&A(2x)&0&\\
0\ 0&0\ 0&1
\end{pmatrix}
\]
and $v_2(y)$ is an immersion of $\bS^1$ into $\bR^5$ to be determined.

For short, we write 
$
R_2(x) = \operatorname{diag}(A(x),A(2x),1)\in SO(5).
$
Let now 
\[
J=
\begin{pmatrix}
0&-1\\
1&0
\end{pmatrix}
\]
and
\[
X=\operatorname{diag}(J,2J,0).
\]
Then
\[
R'_2(x)=R_2(x)X.
\]
Therefore
\[
\partial_x F_2=R_2Xv_2,\qquad \partial_y F_2=R_2v_2',
\]
\[
\partial_{xx}F_{2}=R_2X^2v_2,\qquad \partial_{xy}F_{2}=R_2Xv_2',
\qquad \partial_{yy}F_{2}=R_2v_2''.
\]
Set 
$$
D(y) = \det(Xv_2(y),v'_2(y),X^2v_2(y),Xv'_2(y),v''_2(y)).
$$
Since \(R_2(x)\in SO(5)\), we get
\[
\det\D F_2
=
\det R_2(x) D(y) = D(y).
\]
Thus, the freeness determinant depends only on \(y\) and this type of map $F_2$ is free if and only if the five vectors
$$
v_2',v_2'',Xv_2,Xv_2',X^2v_2
$$
are linearly independent at every point.
A computer-aided search led to the following simple expression:
\[
v_2(y)
=
\begin{pmatrix}
2\\
1+\sin y\\
\cos y\\
\frac{1}{2}+\sin y\\
\cos y
\end{pmatrix}.
\]
The corresponding map on $\bT^2$ is
\[
F_2(x,y)=
\begin{pmatrix}
2\cos x-(1+\sin y)\sin x\\[2pt]
2\sin x+(1+\sin y)\cos x\\[2pt]
\cos(2x+y)-\frac12\sin(2x)\\[2pt]
\sin(2x+y)+\frac12\cos(2x)\\
\cos y
\end{pmatrix}.
\]

A direct computation gives
\[
D(y)
=
-4\sin y+\frac52\sin(2y)+15\cos y-4\cos(2y)-12.
\]
Put
\[
t=\tan\frac y2.
\]
Using
\[
\sin y=\frac{2t}{1+t^2},
\qquad
\cos y=\frac{1-t^2}{1+t^2},
\]
one obtains
\[
D(y(t))
=
-\frac{31t^4+18t^3-2t+1}{(1+t^2)^2}.
\]
The numerator is strictly positive, since
\[
31t^4+18t^3-2t+1
=
31\left(t^2+\frac9{31}t-\frac2{31}\right)^2
+
\frac{43}{31}\left(t-\frac{13}{43}\right)^2
+
\frac{32}{43}.
\]
Hence
\[
31t^4+18t^3-2t+1>0
\]
for every \(t\in\mathbb R\), and therefore
\[
D(y)<0
\]
for every \(y\neq \pi\). The remaining point \(y=\pi\), corresponding to \(t=\infty\), is also harmless, since
\[
D(\pi)
=
-31.
\]
Hence,
\[
\det\D F_2\neq0
\]
everywhere on \(\bT^2\) and therefore
\[
F_2\in \operatorname{Free}^{\infty}(T^2,\mathbb R^5).
\]


\subsection{A free map \(T^3\to\mathbb R^9\)}
We use coordinates \((x,y,z)\in T^3\). 
Set
\[
R_3(x,y)=
\operatorname{diag}
\bigl(
A(x),\,
A(y),\,
A(x+y),\,
A(x-y),\,
1
\bigr)
\in SO(9)
\]
and define $F_3:T^3\longrightarrow \mathbb R^9$ as
$$
F_3(x,y,z)=R_3(x,y)v_3(z)
$$
for some $v_3(z)\in\bR^9$.
For \(i=1,2\), define
\[
X_1=
\operatorname{diag}(J,0,J,J,0),
\qquad
X_2=
\operatorname{diag}(0,J,J,-J,0).
\]
Then
\[
\frac{\partial R_3}{\partial x}=R_3X_1,
\qquad
\frac{\partial R_3}{\partial y}=R_3X_2,
\]
so that
\[
\partial_xF_3=R_3X_1v_3,\qquad
\partial_yF_3=R_3X_2v_3,\qquad
\partial_zF_3=R_3v_3',
\]
\[
\partial_{xx}F_3=R_3X_1^2v_3,\qquad
\partial_{xy}F_3=R_3X_1X_2v_3,\qquad
\partial_{xz}F_3=R_3X_1v_3',
\]
\[
\partial_{yy}F_3=R_3X_2^2v_3,\qquad
\partial_{yz}F_3=R_3X_2v_3',
\qquad
\partial_{zz}F_3=R_3v_3''.
\]
Since \(R_3(x,y)\in SO(9)\), the determinant of the osculating matrix is
\[
\det DF_3(x,y,z)=D(z),
\]
where
\[
D(z)=
\det
\bigl(
X_1v_3,\,
X_2v_3,\,
v_3',\,
X_1^2v_3,\,
X_1X_2v_3,\,
X_1v_3',\,
X_2^2v_3,\,
X_2v_3',\,
v_3''
\bigr).
\]
Thus, the determinant depends only on \(z\).

A computer-aided search of a viable immersion $v_3$ led to the following expression:
\[
v_3(z)=
\begin{pmatrix}
2\\
1+\sin z\\
\cos z\\
\frac12+\sin z\\
\cos z\\
\sin z\\
\cos z\\
2-\cos z\\
-2\sin z-5\cos z
\end{pmatrix}.
\]
Notice that we enforced artificially the constraint to keep the first five components equal to the components of the $v_2$ vector to keep results as homogeneous as possible.
Numerical explorations showed that for reasonable choices of the first few components of a map $v_n$ it is possible to complete the map so that the resulting map $F_n$ is free.

\medskip
As in the previous case, after setting $t=\tan\frac z2$ we get that
\[
D(z(t))=\frac{p(t)}{2(1+t^2)^5},
\]
where
\[
\begin{aligned}
p(t)= {}&
917t^{10}+752t^9+829t^8+32t^7-1474t^6-440t^5\\
&-202t^4-192t^3+397t^2+232t+45.
\end{aligned}
\]
It remains to show that \(p\) has no real roots. A Sturm sequence computation for \(p\) gives the following signs at infinity:
\[
\begin{array}{c|ccccccccccc}
& p_0&p_1&p_2&p_3&p_4&p_5&p_6&p_7&p_8&p_9&p_{10}\\
\hline
+\infty
&+&+&-&-&-&-&+&+&-&+&-\\
-\infty
&+&-&-&+&-&+&+&-&-&-&-
\end{array}
\]

Since the number of sign changes is
\[
V(-\infty)=5,
\qquad
V(+\infty)=5
\]
then, by Sturm's theorem,
\[
\#\{t\in\mathbb R:p(t)=0\}
=
V(-\infty)-V(+\infty)
=
0.
\]
Since
\[
p(0)=45>0
\]
and \(p\) has no real roots, we have
\[
p(t)>0
\qquad
\forall t\in\mathbb R.
\]
Moreover, the leading coefficient is
\[
917>0,
\]
so the point \(z=\pi\), corresponding to \(t=\infty\), is also covered. Therefore
\[
D(z)>0
\qquad
\text{ for all }z\in S^1.
\]

Hence, $\D F_3$ is non-degenerate at every point, namely

\[
F_3\in \operatorname{Free}^{\infty}(T^3,\mathbb R^9).
\]

\subsection{A free map \(\T^4\to \mathbb R^{14}\)}
\label{sec: T4}
We use coordinates
$
(x_1,x_2,x_3,z)\in \T^4.
$

Set
\[
R_4(x_1,x_2,x_3)
=
\]
\[
diag\left(
A(x_1),
A(x_2),
A(x_3),
A(x_1+x_2),
A(x_1+x_3),
A(x_2+x_3),
A(x_1+x_2+x_3)
\right)
\]
\[
\in SO(14)
\]
and
define
\[
F_4:\bT^4\longrightarrow \mathbb R^{14}
\]
by
\[
F_4(x_1,x_2,x_3,z)=R_4(x_1,x_2,x_3)v_4(z).
\]

For \(i=1,2,3\), define
\[
X_i=diag(w_{1i}J,w_{2i}J,\ldots,w_{7i}J),
\]
where 
\[
\begin{aligned}
w_1&=(1,0,0),&
w_2&=(0,1,0),&
w_3&=(0,0,1),\\
w_4&=(1,1,0),&
w_5&=(1,0,1),&
w_6&=(0,1,1),&
w_7&=(1,1,1).
\end{aligned}
\]
Equivalently,
\[
\bigl\langle w_1,x\bigr\rangle=x_1,\quad
\bigl\langle w_2,x\bigr\rangle=x_2,\quad
\bigl\langle w_3,x\bigr\rangle=x_3,
\]
\[
\bigl\langle w_4,x\bigr\rangle=x_1+x_2,\quad
\bigl\langle w_5,x\bigr\rangle=x_1+x_3,\quad
\bigl\langle w_6,x\bigr\rangle=x_2+x_3,\quad
\bigl\langle w_7,x\bigr\rangle=x_1+x_2+x_3.
\]
Thus
\[
R_4(x_1,x_2,x_3)
=
\operatorname{diag}
\bigl(
A(\langle w_1,x\rangle),
A(\langle w_2,x\rangle),
\dots,
A(\langle w_7,x\rangle)
\bigr),
\]
where \(x=(x_1,x_2,x_3)\).
Then
\[
\frac{\partial R_4}{\partial x_i}=R_4X_i.
\]
Therefore the fourteen derivative vectors of \(F\) are obtained by applying the common matrix \(R_4\) to the fourteen vectors
\[
X_1v_4,\ X_2v_4,\ X_3v_4,\ v_4',
\]
\[
X_1^2v_4,\ X_1X_2v_4,\ X_1X_3v_4,\ X_2^2v_4,\ X_2X_3v_4,\ X_3^2v_4,
\]
\[
X_1v_4',\ X_2v_4',\ X_3v_4',\ v_4''.
\]
Thus
\[
\det\cD F_4(x_1,x_2,x_3,z)
=
\det(R_4) D(z),
\]
where
\[
\begin{aligned}
D(z)=
\det\big(&
X_1v_4,\ X_2v_4,\ X_3v_4,\ v_4',
X_1^2v_4,\ X_1X_2v_4,\ X_1X_3v_4,\\
&X_2^2v_4,\ X_2X_3v_4,\ X_3^2v_4,
X_1v_4',\ X_2v_4',\ X_3v_4',\ v_4''
\big).
\end{aligned}
\]
Since \(R_4\in SO(14)\), we have \(\det(R_4)=1\). 
Hence the full freeness determinant is exactly \(D(z)\), and in particular depends only on \(z\).

A computer-aided search of viable immersion $v_4$ led to the following expression:
\[
v_4(z)=
\begin{pmatrix}
2\\
1+\sin z\\
\cos z\\
\frac12+\sin z\\
\cos z\\
\sin z\\
\cos z\\
2-\cos z\\
-2\sin z-5\cos z\\
\sin z\\
\cos z\\
\sin z\\
\cos z\\
h(z)
\end{pmatrix},
\]
where 
$$
h(z)=-20+9\sin z+13\cos z-7\sin(2z)+3\cos(2z).
$$
As we pointed out in case of $\bT^3$, we enforced the first 9 components to coincide with the components of $v_3$ just to display homogeneous results in the article. 
Many choices of different nine initial components lead to different free maps.

\medskip
After setting, as usual,  $t=\tan\frac z2$, we get that
\[
D(z(t))=\frac{q(t)}{(1+t^2)^{11}},
\]
where
\[
\begin{aligned}
q(t)= {}&
-60010t^{22}-561400t^{21}-3039880t^{20}-337260t^{19}\\
&+18967810t^{18}+11879260t^{17}-23665760t^{16}\\
&-17732160t^{15}-42514660t^{14}-536720t^{13}\\
&+22966960t^{12}+23893640t^{11}+32807300t^{10}\\
&+18046840t^9-9136320t^8-11620000t^7\\
&-19941970t^6-7234520t^5+1406360t^4\\
&+511940t^3-71910t^2-7540t-160.
\end{aligned}
\]
A Sturm computation (more detail are included in Appendix~\ref{app: T4}) gives
\[
V(-\infty)=11,\qquad V(+\infty)=11.
\]
Therefore \(q\) has no real roots. Since
\[
q(0)=-160<0
\]
and the leading coefficient is also negative, it follows that
\[
q(t)<0
\qquad\text{for all }t\in\mathbb R.
\]
The point \(z=\pi\), corresponding to \(t=\infty\), is covered by the leading coefficient. Hence
\[
D(z)<0
\qquad\text{for all }z\in \bS^1.
\]
Therefore the osculating matrix has full rank everywhere, namely
\[
F_4\in \operatorname{Free}^{\infty}(T^4,\mathbb R^{14}).
\]

\subsection{A free map \(\T^5\to \mathbb R^{20}\)}

We use coordinates
$
(x_1,x_2,x_3,x_4,z)\in \T^5.
$
We use the ten weights
\[
\begin{aligned}
&w_1=(1,0,0,0),\qquad
w_2=(0,1,0,0),\qquad
w_3=(0,0,1,0),\\
&w_4=(1,1,0,0),\qquad
w_5=(1,0,1,0),\qquad
w_6=(0,1,1,0),\\
&w_7=(0,0,1,1),\qquad
w_8=(0,0,0,1),\\
&w_9=(1,0,0,1),\qquad
w_{10}=(0,1,0,1).
\end{aligned}
\]
and define
\[
R_5(x_1,x_2,x_3,x_4)
=
\operatorname{diag}
\bigl(
A(\langle w_1,x\rangle),
A(\langle w_2,x\rangle),
\dots,
A(\langle w_{10},x\rangle)
\bigr)
\in SO(20),
\]
where \(x=(x_1,x_2,x_3,x_4)\). Equivalently,
\[
\begin{aligned}
R_5(x_1,x_2,x_3,x_4)
=
\operatorname{diag}\bigl(
&A(x_1),A(x_2),A(x_3),
A(x_1+x_2),A(x_1+x_3),A(x_2+x_3),\\
&A(x_3+x_4),A(x_4),A(x_1+x_4),A(x_2+x_4)
\bigr).
\end{aligned}
\]

Now set
\[
F_5:\T^5\longrightarrow\mathbb R^{20},
\qquad
F_5(x_1,x_2,x_3,x_4,z)
=
R_5(x_1,x_2,x_3,x_4)v_5(z).
\]

For \(i=1,2,3,4\), put
\[
X_i=
\operatorname{diag}(w_{1i}J,w_{2i}J,\ldots,w_{10,i}J).
\]
Then
\[
\frac{\partial R_5}{\partial x_i}=R_5X_i.
\]
Therefore the twenty derivative vectors of \(F_5\) are obtained by applying the common matrix \(R_5\) to the twenty vectors
\[
X_1v_5,\quad X_2v_5,\quad X_3v_5,\quad X_4v_5,\quad v_5',
\]
\[
X_iX_jv_5,\qquad 1\le i\le j\le 4,
\]
\[
X_1v_5',\quad X_2v_5',\quad X_3v_5',\quad X_4v_5',\quad v_5''.
\]
Hence
\[
\det DF_5(x_1,x_2,x_3,x_4,z)=D(z),
\]
where
\[
D(z)=
\det\Bigl(
X_1v_5,X_2v_5,X_3v_5,X_4v_5,v_5',
(X_iX_jv_5)_{1\le i\le j\le4},
X_1v_5',X_2v_5',X_3v_5',X_4v_5',v_5''
\Bigr).
\]

Let
\[
s=\sin z,\qquad c=\cos z.
\]

A computer-aided search of viable immersed loops $v$ led to the following expression:
\[
v_5(z)=
\begin{pmatrix}
2\\
1+s\\
c\\
\frac12+s\\
c\\
s\\
c\\
2-c\\
-2s-5c\\
s\\
c\\
s\\
c\\
h(z)\\
10-7s\\
2+4s-6c\\
-5+3s+5c\\
-9+15s+2c\\
-1+8s\\
-2+6c
\end{pmatrix},
\]
where $h(z)$ is defined in the previous subsection.

Putting
\[
t=\tan\frac z2,
\]
and clearing denominators, one obtains a polynomial \(p(t)\) such that
\[
D(z(t))=\frac{p(t)}{(1+t^2)^N}
\]
for a positive integer \(N\). A Sturm computation (more detail are included in Appendix~\ref{app: T5}) gives that \(p\) has no real roots. Moreover,
\[
p(0)=D(0)=626467660>0,
\]
and the value at the point \(z=\pi\), corresponding to \(t=\infty\), is
\[
D(\pi)=77499567520>0.
\]
Thus
\[
D(z)>0
\qquad\text{for all }z\in \bS^1.
\]
Therefore the osculating matrix of \(F_5\) has full rank everywhere, and hence
\[
F_5\in \operatorname{Free}^{\infty}(T^5,\mathbb R^{20}).
\]

\subsection{Obstructions and extensions}

{\BF There is an obstruction to this method for $m>5$.} 
The ansatz used in this section does not work on $\bT^m$ for $m>5$.
Indeed, let $k=m-1$. 
Then the ansatz has the form
\[
F(x_1,\ldots,x_k,z)=R(x_1,\ldots,x_k)v(z),
\]
where \(a:\bS^1\to\mathbb R^{q_m}\) is a loop and \(R\) is a block-diagonal
orthogonal matrix made of planar rotations:
\[
R(x)=
\operatorname{diag}
\bigl(
A(\langle w_1,x\rangle),
\ldots,
A(\langle w_r,x\rangle)
\bigr),
\]
possibly with one additional fixed coordinate if \(q_m\) is odd. 
Here
\[
w_j=(w_{j1},\ldots,w_{jk})\in\mathbb Z^k,
\qquad
r=\left\lfloor \frac{q_m}{2}\right\rfloor .
\]
For \(i=1,\ldots,k\), set
\[
X_i=
\operatorname{diag}
\bigl(
w_{1i}J,\ldots,w_{ri}J
\bigr),
\qquad
J=
\begin{pmatrix}
0&-1\\
1&0
\end{pmatrix}.
\]
Then
\[
\frac{\partial R}{\partial x_i}=RX_i.
\]
Since \(R\) is invertible, the rank of the osculating matrix of \(F\) is the same
as the rank of the matrix whose columns are obtained after removing the common
left factor \(R\).

In particular, the pure second derivatives in the \(x\)-variables are
\[
\partial_{x_i x_j}F=R X_iX_j v(z),
\qquad
1\leq i\leq j\leq k.
\]
Thus, after removing the common factor \(R\), the relevant columns are
\[
X_iX_j v(z),
\qquad
1\leq i\leq j\leq k.
\]
Now look at one rotation block. If \(a_j(z)\in\mathbb R^2\) denotes the
\(j\)-th two-dimensional component of \(v(z)\), then
\[
\bigl(X_iX_\ell v(z)\bigr)_j
=
-w_{ji}w_{j\ell}\,a_j(z).
\]
Hence, for each block \(j\), the pure second derivatives contribute only one
scalar row, namely the quadratic vector
\[
\bigl(w_{j1}^2,\,
w_{j1}w_{j2},\,
\ldots,\,
w_{jk}^2\bigr)
\in \operatorname{Sym}^2(\mathbb R^k).
\]
Therefore the rank of the family
\[
\{X_iX_j v(z)\}_{1\leq i\leq j\leq k}
\]
is at most the number \(r\) of rotation blocks.

But there are
\[
\dim \operatorname{Sym}^2(\mathbb R^k)
=
\frac{k(k+1)}2
=
\frac{m(m-1)}2
\]
pure second derivatives in the \(x\)-variables. 
For the osculating matrix to have full rank, these columns must in particular be linearly independent. Hence a necessary condition for this ansatz to work is
\[
r\geq \frac{m(m-1)}2.
\]
Since
\[
r=\left\lfloor \frac{q_m}{2}\right\rfloor
=
\left\lfloor \frac{m(m+3)}4\right\rfloor,
\]
we obtain the necessary condition
\[
\left\lfloor \frac{m(m+3)}4\right\rfloor
\geq
\frac{m(m-1)}2.
\]
For \(m>5\), however,
\[
\frac{m(m+3)}4
<
\frac{m(m-1)}2,
\]
because this inequality is equivalent to
\[
m+3<2m-2,
\]
that is, to \(m>5\). 
Therefore the necessary condition fails for every
\(m>5\).

Consequently, the simple block-rotation ansatz
\[
F(x,z)=R(x)v(z),
\]
with \(a\) depending on only one variable and \(R\) built only out of planar rotation blocks, cannot produce free maps
\[
T^m\longrightarrow \mathbb R^{q_m}
\]
in critical dimension for \(m>5\).

\paragraph{\bf A modification of the ansatz that might work for $m>5$.}
A natural extension of the ansatz used so far is to replace $v:\bS^1\to\bR^{q_m}$ by some $u:\bS^s\to\bR^{q_m}$ with $s>1$.
As an example, below we use this extended method with $s=2$ to build a free map $\bT^4\to\bR^{14}$.

We use coordinates
$
(x_1,x_2,u,v)\in \bT^4
$
and
consider the five weights
\[
w_1=(-2,-1),\;
w_2=(2,0),\;
w_3=(1,-2),\;
w_4=(-1,-2),\;
w_5=(0,-2).
\]
Define
\[
R_{10}(x_1,x_2)
=
\operatorname{diag}
\bigl(
R_{-2x_1-x_2},
R_{2x_1},
R_{x_1-2x_2},
R_{-x_1-2x_2},
R_{-2x_2}
\bigr)
\in SO(10).
\]
Equivalently,
\[
R_{10}(x_1,x_2)
=
\operatorname{diag}
\bigl(
R_{\langle w_1,x\rangle},
R_{\langle w_2,x\rangle},
R_{\langle w_3,x\rangle},
R_{\langle w_4,x\rangle},
R_{\langle w_5,x\rangle}
\bigr),
\qquad x=(x_1,x_2).
\]

We now construct \(b:\bT^2\to\mathbb R^{10}\) and
\(c:\bT^2\to\mathbb R^4\).  Set
\[
c(u,v)=(\cos u,\sin u,\cos v,\sin v).
\]
Write
\[
b(u,v)=(b_1(u,v),\ldots,b_5(u,v)),
\qquad
b_r(u,v)\in\mathbb R^2,
\]
and define
\[
b_r(u,v)=\rho_r(u)
\begin{pmatrix}
\cos(\mu_r v)\\
\sin(\mu_r v)
\end{pmatrix},
\]
where
\[
(\mu_1,\mu_2,\mu_3,\mu_4,\mu_5)=(2,2,-2,-3,1).
\]
We take
\[
\rho_r(u)=e^{Q_r(u)}.
\]
Let
\[
q_r(u)=Q_r'(u).
\]
The logarithmic derivatives are chosen as
\[
\begin{aligned}
q_1&=2\cos u+3\sin u,\\
q_2&=-4\cos u-2\sin u-2\cos(2u)+3\sin(2u),\\
q_3&=-3\cos u+2\sin u+4\cos(2u)-\sin(2u),\\
q_4&=\cos u-2\sin u-\cos(2u)+2\sin(2u),\\
q_5&=-2\cos u-2\sin u-2\cos(2u)-\sin(2u).
\end{aligned}
\]
Equivalently, one may choose the following periodic primitives:
\[
\begin{aligned}
Q_1&=2\sin u-3\cos u,\\
Q_2&=-4\sin u+2\cos u-\sin(2u)-\frac32\cos(2u),\\
Q_3&=-3\sin u-2\cos u+2\sin(2u)+\frac12\cos(2u),\\
Q_4&=\sin u+2\cos u-\frac12\sin(2u)-\cos(2u),\\
Q_5&=-2\sin u+2\cos u-\sin(2u)+\frac12\cos(2u).
\end{aligned}
\]
Thus, each \(\rho_r\) is smooth, positive and periodic.

Set now
\[
F(x_1,x_2,u,v)
=
\operatorname{diag}\bigl(R_{10}(x_1,x_2),I_4\bigr)
\begin{pmatrix}
b(u,v)\\
c(u,v)
\end{pmatrix}.
\]

For \(i=1,2\), set
\[
X_i=\operatorname{diag}(w_{1i}J,w_{2i}J,w_{3i}J,w_{4i}J,w_{5i}J).
\]
Then
\[
\frac{\partial R_{10}}{\partial x_i}=R_{10}X_i.
\]
The four columns supplied by the \(I_4\)-block are
\[
c_u,\qquad c_v,\qquad c_{uu},\qquad c_{vv}.
\]
They are linearly independent, since
\[
\det(c_u,c_v,c_{uu},c_{vv})=-1.
\]
Therefore, after expanding the full \(14\times14\) osculating determinant along
the \(I_4\)-block, it is enough to check the \(10\times10\) determinant
\[
\Delta_b
=
\det\bigl(
X_1b,\ X_2b,\ 
X_1^2b,\ X_1X_2b,\ X_2^2b,\ 
X_1b_u,\ X_1b_v,\ 
X_2b_u,\ X_2b_v,\ 
b_{uv}
\bigr).
\]

A direct block computation gives
\[
\Delta_b
=
4784\,e^{2(Q_1+\cdots+Q_5)}
P(q_1,\ldots,q_5),
\]
where, after setting
\[
t=\tan\frac u2,
\]
one has
\[
P(q_1(u),\ldots,q_5(u))
=
\frac{16N(t)}{(1+t^2)^6},
\]
with
\[
\begin{aligned}
N(t)= {}&
148t^{12}-142t^{11}-385t^{10}+4821t^9+4271t^8\\
&-42986t^7+39802t^6+17416t^5-20370t^4\\
&-5368t^3+3111t^2+1203t+111.
\end{aligned}
\]
A Sturm sequence computation gives
\[
V(-\infty)=6,\qquad V(+\infty)=6.
\]
Hence, \(N\) has no real roots.  
Since
\[
N(0)=111>0
\]
and the leading coefficient of \(N\) is
\[
148>0,
\]
we have
\[
N(t)>0
\qquad\text{for all }t\in\mathbb R.
\]
The point \(u=\pi\), corresponding to \(t=\infty\), is also covered by the
positive leading coefficient.  Therefore
\[
\Delta_b>0
\qquad\text{for all }(u,v)\in \bT^2.
\]
Consequently the full osculating determinant of \(F\) is everywhere nonzero, and
so
\[
F\in \operatorname{Free}^{\infty}(T^4,\mathbb R^{14}).
\]

\medskip
All our attempts at finding free maps on $\bT^6$ and $\bT^7$ with this extended method failed so far.
If the reason is that the set of free maps for $m>5$ is ``very small'', a systematic numerical exploration might be able to find them.

\paragraph{\(k\)-free smooth maps on \(\bT^2\) in critical dimension.}
Given a manifold $M$ of dimension $m$, a smooth map \(F:M\to\mathbb R^q\) is {\BF\(k\)-free} if, in local coordinates, the partial derivatives of $F$ up to order $k$ are linearly independent at every point.
In particular, 2-free maps are simply free maps.
The critical dimension for the existence of $k$-free maps on $M$, namely the number of partial derivative of order from 1 up to $k$, is
\[
\text{\BF$q_{m,k}$}
=
\sum_{j=1}^{k} \binom{m+j-1}{j}
=
\binom{m+k}{k}-1.
\]

In~\cite{Cos90}, S.I. Rodriguez Costa showed that there exist $k$-free maps on $\bS^1$ in critical dimension for every $k$.
Here, we show that our method allows to find $k$-free maps on $\bT^2$ in critical dimension.

We discuss first the case $k=3$.
The critical dimension for a 3-free map on $\bT^2$ is 
$$
q_{2,3}=\binom{5}{2}-1 = 9.
$$

We look for a map $\bT^2\to\bR^9$ that, projected on the first five components, coincides with the free map   $\bT^2\to\bR^5$ illustrated in Section~\ref{sec: T2}.
Define
\[
R_3(x)=\operatorname{diag}\bigl(A(x),A(2x),1,A(3x),A(4x)\bigr)\in SO(9).
\]
Equivalently, if
\[
X=\operatorname{diag}(J,2J,0,3J,4J),
\]
then
\[
R'_3(x)=R_3(x)X.
\]

Set
\[
F_3:T^2\longrightarrow \mathbb R^9,
\qquad
F(x,y)=R_3(x)v_3(y).
\]

Since \(R_3(x)\in SO(9)\), the common factor \(R_3(x)\) can be removed from
the osculating determinant.  The relevant determinant is therefore
\[
D(y)=
\det\bigl(
Xv_3,\ v'_3,\ X^2v_3,\ Xv'_3,\ v''_3,\ X^3v_3,\ X^2v'_3,\ Xv''_3,\ v'''_3
\bigr).
\]
Indeed,
\[
\partial_xF_3=R_3Xv_3,\qquad \partial_yF_3=R_3v'_3,
\]
\[
\partial_{xx}F_3=R_3X^2v_3,\qquad \partial_{xy}F_3=R_3Xv'_3,\qquad \partial_{yy}F_3=R_3v''_3,
\]
and
\[
\partial_{xxx}F_3=R_3X^3v_3,\qquad
\partial_{xxy}F_3=R_3X^2v'_3,\qquad
\partial_{xyy}F_3=R_3Xv''_3,\qquad
\partial_{yyy}F_3=R_3v'''_3.
\]
Thus
\[
\det\bigl(\partial_xF,\partial_yF,\partial_{xx}F,\partial_{xy}F,\partial_{yy}F,
\partial_{xxx}F,\partial_{xxy}F,\partial_{xyy}F,\partial_{yyy}F\bigr)=D(y).
\]

A computer-aided search led to the expression
\[
v_3(y)=
\begin{pmatrix}
2\\
1+\sin y\\
\cos y\\
\frac12+\sin y\\
\cos y\\
\sin(2y)\\
\cos(2y)\\
\sin(2y)\\
7-2\sin(2y)
\end{pmatrix}.
\]

We now verify that \(D(y)\) is nowhere zero.  Put
\[
t=\tan\frac y2.
\]
Using
\[
\sin y=\frac{2t}{1+t^2},
\qquad
\cos y=\frac{1-t^2}{1+t^2},
\]
one obtains
\[
D(y(t))=\frac{288P(t)}{(1+t^2)^8},
\]
where
\[
\begin{aligned}
P(t)= {}&
6372t^{16}+17377t^{15}+165036t^{14}+378109t^{13}\\
&+1389496t^{12}+1516089t^{11}+5179932t^{10}
+402061t^9\\
&+7821744t^8-1544773t^7+6119524t^6-1243329t^5\\
&+2688648t^4-602325t^3+445140t^2-7113t+3372.
\end{aligned}
\]
A Sturm sequence computation gives
\[
V(-\infty)=8,\qquad V(+\infty)=8.
\]
Therefore \(P\) has no real roots.  Since
\[
P(0)=3372>0
\]
and the leading coefficient of \(P\) is
\[
6372>0,
\]
we get
\[
P(t)>0
\qquad\text{for all }t\in\mathbb R.
\]
The point \(y=\pi\), corresponding to \(t=\infty\), is covered by the
positive leading coefficient.  Hence
\[
D(y)>0
\qquad\text{for all }y\in S^1.
\]

Thus, the nine vectors
\[
\partial_xF,\ \partial_yF,\ \partial_{xx}F,\ \partial_{xy}F,\ \partial_{yy}F,
\partial_{xxx}F,\ \partial_{xxy}F,\ \partial_{xyy}F,\ \partial_{yyy}F
\]
are linearly independent everywhere, i.e. $F$ is a 3-free immersion of $\bT^2$ into $\bR^9$.

\medskip
We now consider the cases $k=4,5,6$.
We choose the following versions for the rotation matrices:
\begin{align*}
R_4(x)&=\operatorname{diag}\bigl(A(x),\dots,A(7x)\bigr)\in SO(14)\\
R_5(x)&=\operatorname{diag}\bigl(A(x),\dots,A(10x)\bigr)\in SO(20)\\
R_6(x)&=\operatorname{diag}\bigl(A(x),\dots,A(13x),1\bigr)\in SO(27).
\end{align*}
Let $v_6(y)$ a vector such that $R_6(x)v_6(y)$ is a $6$-free map.
With the choices above, the projection to the first 14 components (resp. 20 components) of $v_6(y)$ is such that  $R_4(x)v_4(y)$ (resp. $R_5(x)v_5(y)$) is a 4-free map (resp. 5-free map).
Hence, here we present only the results regarding $k=6$.

A viable loop $v_6(y)$ is the following:
\[
v_6(y)=
\begin{pmatrix}
\cos y\\
\sin y\\
\cos y\\
\sin y\\
\cos y\\
\sin y\\
\cos 2y\\
\sin 2y\\
\cos 2y\\
\sin 2y\\
71\cos y\\
-42\\
-34\cos y\\
58+42\cos y\\
\cos y\\
\sin y\\
-\sin 3y\\
\cos 2y-\cos3y\\
5\sin y-2\cos2y\\
-7+2\sin2y\\
\cos y\\
\sin y\\
\cos y\\
\sin y\\
\alpha(y)\\
\beta(y)\\
\gamma(y)
\end{pmatrix},
\]
where
\begin{align*}
\alpha(y)=&\;8035\cos y+341\sin2y-108\cos2y-317\cos3y-17\sin4y\\
&-16\cos4y-15\sin5y+5\cos5y,\\
\beta(y)=&\;2850+4887\cos y+33\sin2y-1244\cos2y+314\sin3y\\
&-11\cos3y-11\sin4y-16\cos4y-4\sin5y-15\cos5y,\\
\gamma(y)=&\;-349\cos3y+85\sin4y+177\cos4y+30\sin5y-5\cos5y.
\end{align*}
Let 
$$
X=\operatorname{diag}(J,2J,\dots,13J,0).
$$
The reader can verify with symbolic computing that 
$$
D_6(y)=\det(X^pv_6^{(q)}(y))_{1\leq p+q\leq6} = \frac{P(t)}{(1+t^2)^{23}},
$$
where $P\in\bZ[t]$ is a polynomial of degree 46.
Via Sturm's algorithm, it can be shown that this polynomial has no zeros.

Based on the results in~\cite{Cos90} and in this article, we pose the following conjecture:

\medskip
{\em for every $k=2,3,...$, there exist smooth $k$-free maps $\bT^2\to\bR^{q_{2,k}}$.}

\section{Free maps in critical dimension on compact closed surfaces}
\label{sec: surfaces}
The main result of this section is that, given two compact closed smooth surfaces $M$ and $N$, the existence of smooth free maps in critical dimension on $M$ and $N$ implies the existence of free maps in critical dimension on their connected sum $M\# N$.
Due to the classification theorem for 2-manifolds, this will, in turn, allow us to show that free maps arise on every compact closed surface.



\subsection{The ``connected sum'' of two free maps}
It is a standard result that every closed compact orientable (resp. non-orientable) surface is the connected sum of some finite number of copies of $\bT^2$ (resp. $\bR P^2$).
Hence, in order to prove the existence of smooth free maps in any closed compact surface, it is enough to show that one can ``glue together'' nicely enough any two free maps on any given pair of surfaces.



\medskip
We shall use the following linear variant of the standard quadratic free map:
\[
\widetilde P:\mathbb R^2\to\mathbb R^5,
\qquad
\text{\BF$\widetilde P(x,y)$}
=
\left(
x,\ y,\ \frac{x^2-y^2}{2},\ xy,\ \frac{x^2+y^2}{2}
\right).
\]
This differs from
\[
P(x,y)=\left(x,y,\frac{x^2}{2},xy,\frac{y^2}{2}\right)
\]
by an invertible linear change in the last three target coordinates. Hence
\(\widetilde P\) is free.
In polar coordinates,
\[
x=r\cos\theta,\qquad y=r\sin\theta,
\]
we write
\[
\text{\BF$\widetilde W(r,\theta)$}
=
\widetilde P(r\cos\theta,r\sin\theta).
\]
Explicitly,
\[
\widetilde W(r,\theta)
=
\left(
r\cos\theta,\,
r\sin\theta,\,
\frac{r^2}{2}\cos2\theta,\,
\frac{r^2}{2}\sin2\theta,\,
\frac{r^2}{2}
\right).
\]

We also introduce the signed-cylinder model
\[
\text{\BF$C(u,\theta)$}
=
\left(
\cos\theta,\,
\sin\theta,\,
u\cos2\theta,\,
u\sin2\theta,\,
-\frac{u^2}{2}
\right).
\]
A direct computation gives
\[
\det\D C
=
2.
\]
Thus \(C\) is free on a full signed cylinder
\[
(-\epsilon,\epsilon)\times S^1.
\]

For \(\lambda>0\), define the anisotropic target dilation
\[
D_\lambda(X_1,X_2,X_3,X_4,X_5)
=
(\lambda X_1,\lambda X_2,\lambda^2X_3,\lambda^2X_4,\lambda^2X_5).
\]
We define the scaled signed-cylinder collar as
\[
\text{\BF$C_\lambda(u,\theta)$}=D_\lambda C(u,\theta),
\]
that is,
\[
C_\lambda(u,\theta)
=
\left(
\lambda\cos\theta,\,
\lambda\sin\theta,\,
\lambda^2u\cos2\theta,\,
\lambda^2u\sin2\theta,\,
-\frac{\lambda^2u^2}{2}
\right).
\]

The idea of the proof is the following.
In Lemma~\ref{lemma: 1} we show that there is a free map on the cylinder that is equal to $C(u,\theta)$ at one end of the cylinder and to $W(2u,\theta)$ at the other end.
Then, in Lemma~\ref{lem:gluing}, we show that, given any free map $f$ on a surface $M$, we can modify $f$ so that, near some disc, after replacing the disc with a cylinder collar, the modified free map coincides with $C_\lambda(u,\theta)$ for some $\lambda>0$.
At this point, in Theorem~\ref{thm: connected sum} we show how to glue two free maps on two surfaces $M$ and $N$ so to obtain a free map on $M\#N$.

\begin{lemma}[Interpolation from the polar collar to the signed cylinder]
\label{lemma: 1}
Let $u\in[0,1]$ and $\theta\in\bS^1$.
There exists a smooth free map
\[
A:[0,1]\times S^1\to\mathbb R^5
\]
such that, near \(u=0\),
\[
A(u,\theta)=C(u,\theta),
\]
and, near \(u=1\),
\[
A(u,\theta)=\widetilde W(2u,\theta).
\]
Consequently, for every \(\lambda>0\), the map
\[
A_\lambda:=D_\lambda\circ A
\]
is a free interpolation between \(C_\lambda\) near \(u=0\) and
\[
\widetilde W(2\lambda u,\theta)
\]
near \(u=1\).
\end{lemma}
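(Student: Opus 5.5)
By rotational symmetry in $\theta$, the problem reduces to an ordinary differential inequality in $u$. I would look for $A$ in the same $\bS^1$-equivariant form shared by $C$ and $\widetilde W(2u,\theta)$:
\[
A(u,\theta)=\bigl(a(u)\cos\theta,\ a(u)\sin\theta,\ b(u)\cos2\theta,\ b(u)\sin2\theta,\ c(u)\bigr).
\]
Here $C$ corresponds to $(a,b,c)=(1,u,-u^2/2)$ and $\widetilde W(2u,\theta)$ to $(a,b,c)=(2u,2u^2,2u^2)$. Since $A(u,\theta+\phi)=\operatorname{diag}(A(\phi),A(2\phi),1)A(u,\theta)$, exactly as in the tori constructions of Section~\ref{sec: tori}, the osculating determinant depends only on $u$. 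It can therefore be computed at $\theta=0$.

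At $\theta=0$ the five columns split by parity of coordinates. The columns $\partial_\theta A,\partial_{u\theta}A$ live in coordinates $2,4$, and $\partial_uA,\partial_{\theta\theta}A,\partial_{uu}A$ live in coordinates $1,3,5$. This gives
\[
\det\D A=\pm\,2\,(ab'-a'b)\cdot\Delta,\qquad
\Delta=c''\,(ab'-4a'b)-c'\,(ab''-4a''b).
\]
Freeness of $A$ is thus equivalent to $(b/a)'\neq0$ with $a>0$, together with $\Delta\neq0$ on $[0,1]$. I would check both endpoint models. For $C$ one gets $ab'-a'b=1$ and $\Delta=-1$. For $\widetilde W(2u,\cdot)$ one gets $ab'-a'b=4u^2>0$ and $\Delta=-64u^2<0$ (on $u\in(0,1]$, away from the polar origin). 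Both signs agree at the two ends, so no topological obstruction to interpolating appears at this level.

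The construction then goes in two steps. First choose $a,b$ by cutoff interpolation. I would take $a>0$ nondecreasing with $a=1$ near $0$ and $a=2u$ near $1$, and set $b=a\varphi$ with $\varphi'>0$, $\varphi=u$ near $0$ and $\varphi=u$ near $1$. This secures the first factor. Second, with $g=ab'-4a'b$ and $h=ab''-4a''b$, choose $w=c'$ solving the first-order linear inequality $g\,w'-h\,w<0$. The boundary data are $w=-u$ near $0$ and $w=4u$ near $1$; I would finally set $c(u)=-\int_0^u\ldots$, i.e. $c$ is the primitive of $w$ matching $-u^2/2$ near $0$. The remaining condition near $1$ is that $c$ agree with $2u^2$ there, which fixes one additive constant. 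This can be arranged by adjusting $w$ on a middle interval, since the inequality is open and invariant under adding small positive multiples of suitable solutions of the homogeneous equation.

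The main obstacle is this second step. The coefficient $g$ equals $1$ near $0$ but $-8u^2$ near $1$, so it must vanish somewhere, and at such points the inequality forces $h\,w>0$. The plan is to choose $(a,b)$ so that $g$ has a single transversal zero $u_0$ with $h(u_0)\neq0$. On each side of $u_0$, I would solve $w'=(h w-\eta)/g$ (respectively the reversed inequality) with a small positive $\eta$, matching the given data at the ends. Near $u_0$, I would keep $w$ of the sign of $h(u_0)$ so that $-hw<0$ dominates. If the integration constant for $c$ cannot be matched this way, I would exploit the residual freedom to reparametrize $u$ and to rescale $\varphi$ in the middle. The final claim about $A_\lambda=D_\lambda\circ A$ is immediate: $D_\lambda$ is an invertible linear map of $\bR^5$, so it preserves freeness, and $D_\lambda\widetilde W(2u,\theta)=\widetilde W(2\lambda u,\theta)$ by homogeneity.
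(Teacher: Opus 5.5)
\textbf{There is a genuine gap in Step~2.} Your reduction is sound and is the same one the paper makes. The paper uses the identical $\bS^1$-equivariant ansatz with $b=ua$ (your $\varphi=u$). Then $ab'-a'b=a^2$ and $\Delta=aH$, so your factorization reproduces its $\det\D F=-2a^3H$. Your endpoint signs and the $D_\lambda$ remark are also correct. But Step~2 is the whole content of the lemma, and there the proposal is not yet a proof. The paper settles it with explicit polynomials $a,c$ whose $2$-jets match both models, so the constant $c(1)=2$ is built in. It adds a Sturm certificate that $a>0$ and $K>0$ on $[0,1]$, and a cutoff with $C^2$-error $O(\delta)$. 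Your qualitative ODE route would avoid computer algebra and give exact agreement directly, but as written it has two concrete defects.

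Put $\mu=\exp(-\int h/g)>0$ on each side of $u_0$. Your inequality then reads $(\mu w)'<0$ where $g>0$ and $(\mu w)'>0$ where $g<0$.

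\emph{First defect: $h(u_0)\neq0$ is not enough.} Since $w=-u$ near $0$, $\mu w$ starts at $0$ and decreases, so $w<0$ on $(0,u_0)$. The condition $h(u_0)w(u_0)>0$ then forces $h(u_0)<0$. Because $h=g'+3a'b'$ with $a'\ge0$ and $b'>0$, transversality of the zero does not give this sign, and if $h(u_0)>0$ no $w$ exists at all.

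\emph{Second defect, more serious: matching $c(1)=2$.} This condition, i.e.\ $\int_0^1 w\,du=2$, is not a small correction. On $(u_0,1]$ one has $w\le 4\mu(1)/\mu$, hence
\[
\int_0^1 w\,du<4\int_{u_0}^1\frac{\mu(1)}{\mu(u)}\,du,
\]
a bound fixed by $(a,b)$ alone that you never estimate. Openness of the inequality only allows small changes of the integral. The homogeneous solutions $C/\mu$ do not vanish near $u=1$, and cutting them off works against the inequality where the cutoff decreases. So your stated justification fails, and the fallback ``exploit the residual freedom'' is precisely the missing argument.

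\emph{How to supply it.} Take $b=ua$, and let $a$ be a convex smoothing of $\max(1,2u)$ that differs from it only on $[\tfrac12-\epsilon,\tfrac12+\epsilon]$.
Then $g$ has exactly one transversal zero $u_0$, and $h(u_0)<0$: there $a''$ is of order $1/\epsilon$, while $a'(u_0)=a(u_0)/(3u_0)\approx\tfrac23$.
On $[\tfrac12+\epsilon,1]$ the inequality is $(uw)'>0$, so admissible $w$ with $\int_0^1 w$ close to $4\ln 2>2$ exist. Admissible $w$ with arbitrarily negative integral also exist.
Admissible $w$ form a convex set (affine boundary data, linear strict inequality), so a convex combination achieves $\int_0^1 w=2$.
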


\begin{proof}
Consider maps of the form
\[
F(u,\theta)
=
\left(
a(u)\cos\theta,\,
a(u)\sin\theta,\,
b(u)\cos2\theta,\,
b(u)\sin2\theta,\,
c(u)
\right).
\]
Set
\[
b(u)=u\,a(u).
\]
Then a direct calculation gives
\[
\det\D F
=
-2a(u)^3H(u),
\]
where
\[
H(u)
=
\bigl(a-3ua'\bigr)c''
+
\bigl(3ua''-2a'\bigr)c'.
\]

Choose
\[
a(u)=
\frac{
508u^9+2909u^8+2292u^7+986u^6
-53420u^5+76121u^4-28396u^3+1000
}{1000},
\]
and
\[
c(u)=
\frac{
u^2\bigl(
7214u^7+3187u^6+1311u^5-291u^4
-144573u^3+223056u^2-87404u-500
\bigr)
}{1000}.
\]
Then
\begin{align*}
&a(0)=1,\;\; \;\;a'(0)=0,\;\; \;\;a''(0)=\phantom{-}0,\\
&b(0)=0,\;\;\;\; b'(0)=1,\;\; \;\;b''(0)=\phantom{-}0,\\
&c(0)=0,\;\;\;\; c'(0)=0,\;\;\;\; c''(0)=-1,
\end{align*}
Notice that
\[
C(u,\theta)
=
\left(
\alpha(u)\cos\theta,\,
\alpha(u)\sin\theta,\,
\beta(u)\cos2\theta,\,
\beta(u)\sin2\theta,\,
\gamma(u)
\right)
\]
for
$$
\alpha(u)=1,\;\;\beta(u)=u,\;\;\gamma(u)=-u^2/2.
$$
Since the 2-jets of $\alpha,\beta,\gamma$ at $u=0$ coincide with the 2-jets of, respectively, $a,b,c$ at the same point, it follows that $j^2F=j^2C$ along $\{0\}\times\bS^1$, where the 2-jets are taken with respect to the variables $(u,\theta)$.

At the other endpoint,
\[
a(1)=2,\qquad a'(1)=2,\qquad a''(1)=0,
\]
\[
b(1)=2,\qquad b'(1)=4,\qquad b''(1)=4,
\]
\[
c(1)=2,\qquad c'(1)=4,\qquad c''(1)=4.
\]
Now, notice that 
\[
\widetilde W(2u,\theta)
=
\left(
\hat\alpha(u)\cos\theta,\,
\hat\alpha(u)\sin\theta,\,
\hat\beta(u)\cos2\theta,\,
\hat\beta(u)\sin2\theta,\,
\hat\gamma(u)
\right).
\]
for
$$
\hat\alpha(u) = 2u,\;\;
\hat\beta(u) = 2u^2,\;\;
\hat\gamma(u) = 2u^2.
$$
Since the 2-jets of $\hat\alpha,\hat\beta,\hat\gamma$ at $u=1$ coincide with the 2-jets of, respectively, $a,b,c$ at the same point, it follows that $j^2F=j^2W$ along $\{1\}\times\bS^1$, where the 2-jets are taken with respect to the variables $(u,\theta)$.

For the chosen $a(u)$ and $c(u)$, one can verify that
\[
H(u)=-\frac{K(u)}{500000},
\]
where
\[
\begin{aligned}
K(u)={}&
164912040u^{16}+2918300000u^{15}+3807304560u^{14}
+2517895353u^{13}\\
&-89566180787u^{12}+151601083986u^{11}
-80205126406u^{10}-10857198663u^9\\
&+119308348533u^8-243323486472u^7
+220917024720u^6\\
&-93123128268u^5+15542937404u^4
+1388938000u^3\\
&-1338336000u^2+262212000u+500000.
\end{aligned}
\]
By Sturm's theorem,
\[
a(u)>0
\text{ and }
K(u)>0
\]
for all $u\in[0,1]$.
Hence,
\[
\det\D F
=
\frac{a(u)^3K(u)}{250000}>0
\]
on \([0,1]\times S^1\).

At this stage, \(F\) agrees with the each endpoint model to order \(2\) at one of the two boundaries of the collar.
It remains to make the agreement with the endpoint models exact in
neighborhoods of the two boundary circles. 
Let
\[
h(u)=(a(u),b(u),c(u)).
\]
Let
\[
h_0(u)=\left(1,u,-\frac{u^2}{2}\right),
\]
and 
\[
h_1(u)=\left(2u,2u^2,2u^2\right).
\]
By the endpoint conditions already verified, we have that
\[
h(u)-h_0(u)=O(u^3)\qquad \text{as }u\to 0,
\]
together with the corresponding estimates for the first and second derivatives,
and similarly
\[
h(u)-h_1(u)=O((1-u)^3)\qquad \text{as }u\to 1,
\]
again with the corresponding first and second derivative estimates.

Now, choose a smooth function \(\rho:\mathbb R\to[0,1]\) such that
\[
\rho(s)=0 \quad \text{for } s\le 1,
\qquad
\rho(s)=1 \quad \text{for } s\ge 2.
\]
For \(0<\delta<1/4\), set
\[
\rho_0^\delta(u)=\rho\left(\frac{u}{\delta}\right),
\qquad
\rho_1^\delta(u)=\rho\left(\frac{1-u}{\delta}\right).
\]
First define
\[
h^{(0)}_\delta(u)
=
h_0(u)+\rho_0^\delta(u)\bigl(h(u)-h_0(u)\bigr).
\]
Thus \(h^{(0)}_\delta=h_0\) for \(0\le u\le \delta\), and
\(h^{(0)}_\delta=h\) for \(u\ge 2\delta\). Then define
\[
\widetilde h_\delta(u)
=
h_1(u)+\rho_1^\delta(u)
\bigl(h^{(0)}_\delta(u)-h_1(u)\bigr).
\]
Since the supports of the two transition regions are disjoint, this gives
\[
\widetilde h_\delta=h_0
\quad\text{for }0\le u\le \delta,
\]
\[
\widetilde h_\delta=h
\quad\text{for }2\delta\le u\le 1-2\delta,
\]
and
\[
\widetilde h_\delta=h_1
\quad\text{for }1-\delta\le u\le 1.
\]

Moreover,
\[
\|\widetilde h_\delta-h\|_{C^2([0,1])}\longrightarrow 0
\qquad\text{as }\delta\to 0.
\]
Indeed, on the transition region near \(u=0\), the differences
\(h-h_0\), \((h-h_0)'\), and \((h-h_0)''\) are respectively
\(O(\delta^3)\), \(O(\delta^2)\), and \(O(\delta)\), while derivatives of
\(\rho_0^\delta\) contribute at worst factors of \(\delta^{-1}\) and
\(\delta^{-2}\). Hence the resulting \(C^2\)-error is \(O(\delta)\).
The same argument applies near \(u=1\).

Replacing \(h=(a,b,c)\) by \(\widetilde h_\delta
=(\widetilde a_\delta,\widetilde b_\delta,\widetilde c_\delta)\), and setting
\[
\widetilde F_\delta(u,\theta)
=
\bigl(
\widetilde a_\delta(u)\cos\theta,\,
\widetilde a_\delta(u)\sin\theta,\,
\widetilde b_\delta(u)\cos 2\theta,\,
\widetilde b_\delta(u)\sin 2\theta,\,
\widetilde c_\delta(u)
\bigr),
\]
we obtain maps \(\widetilde F_\delta\) converging to \(F\) in the
\(C^2\)-topology as \(\delta\to0\). Since $\det\D F$
has a positive minimum on the compact cylinder \([0,1]\times S^1\),
freeness is preserved for all sufficiently small \(\delta\). 
Choose such a $\delta$ and set  $A=F_\delta$.
Then $A$ is free, agrees exactly with the signed-cylinder model \(C\) near \(u=0\), and agrees exactly with the polar model \(W(2u,\theta)\) near \(u=1\).
\end{proof}

\begin{lemma}[Normalization to the signed-cylinder collar]
\label{lem:gluing}
Let \(M\) be a smooth surface, let \(p\in M\), and let
\[
f\in\Free^\infty(M,\mathbb R^5).
\]
Then, for every sufficiently small \(\lambda>0\), after possibly postcomposing
\(f\) with an affine automorphism of \(\mathbb R^5\), there exists a disk
\(D\subset M\) centered at \(p\) and a free map
\[
\widehat f:M\setminus\operatorname{int}D\to\mathbb R^5
\]
such that:

\begin{enumerate}
\item \(\widehat f\) agrees with \(f\), up to the chosen affine automorphism,
outside an arbitrarily small neighborhood of \(D\);

\item near \(\partial D\), in collar coordinates \((u,\theta)\in[0,\epsilon)\times S^1\),
\[
\widehat f(u,\theta)=C_\lambda(u,\theta).
\]
\end{enumerate}
\end{lemma}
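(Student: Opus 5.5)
Choose an affine automorphism $L$ of $\mathbb R^5$ that puts $f$ into the quadratic normal form $\widetilde P$ at $p$. Because $f$ is free, the five vectors $\partial_xf,\partial_yf,\partial_{xx}f,\partial_{xy}f,\partial_{yy}f$ at $p$ form a basis of $\mathbb R^5$, for any local coordinates $(x,y)$ centered at $p$. Translate $f(p)$ to the origin. Then apply the linear map sending these vectors to the osculating frame of $\widetilde P$ at $0$, namely $e_1,e_2,e_3+e_5,e_4,-e_3+e_5$. After this step $L\circ f$ has, at $p$, the same $2$-jet as $\widetilde P$ in the coordinates $(x,y)$. Taylor's theorem then gives $L\circ f(x,y)=\widetilde P(x,y)+E(x,y)$ with $E=O(|(x,y)|^3)$, $\partial E=O(|(x,y)|^2)$ and $\partial^2E=O(|(x,y)|)$.

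Next replace $L\circ f$ by $\widetilde P$ exactly on a small disk. Take a cutoff $\chi$ equal to $1$ on $\{r\le\eta\}$ and $0$ on $\{r\ge2\eta\}$, with $|\partial^k\chi|\lesssim\eta^{-k}$, and set
\[
g=\widetilde P+(1-\chi)E .
\]
By the same bookkeeping as in the proof of Lemma~\ref{lemma: 1}, $\|g-L\circ f\|_{C^2}=O(\eta)$ on the annulus $\eta\le r\le 2\eta$. Since $\det\D\widetilde P=1$ and $\det\D(L\circ f)$ is bounded away from zero near $p$, the map $g$ is free for $\eta$ small. It equals $L\circ f$ outside $\{r<2\eta\}$, which gives property~1, and it equals $\widetilde P$ on $\{r\le\eta\}$.

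Now fix $\lambda$ with $2\lambda\le\eta$. On the annulus $\lambda\le r\le 2\lambda$, write $g=\widetilde W(r,\theta)$ in polar coordinates. Reparametrize this annulus by $u\in[1/2,1]$ via $r=2\lambda u$, so that $g(u,\theta)=\widetilde W(2\lambda u,\theta)$. This is exactly the end $u=1$ of $A_\lambda=D_\lambda\circ A$ from Lemma~\ref{lemma: 1}, because $D_\lambda\widetilde W(2u,\theta)=\widetilde W(2\lambda u,\theta)$ (the last three components of $\widetilde W$ are quadratic in $r$).

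Build $\widehat f$ as follows. Remove the open disk $\{r<\lambda\}$, and let $D$ be its closure. Keep $g$ for $r\ge2\lambda$. On a collar $[0,1]\times S^1$ attached along $\{r=2\lambda\}$, glue in $A_\lambda$: near $u=1$ it coincides with $g$ expressed in polar coordinates $r=2\lambda u$, so the gluing is smooth. Identify this collar with the annulus $\lambda\le r\le2\lambda$ through any diffeomorphism that is $r=2\lambda u$ near $u=1$ and sends $u=0$ to $\partial D$. Near $\partial D$ we then have $\widehat f=D_\lambda C=C_\lambda$. Freeness holds because $A$ is free and $D_\lambda$ is a linear automorphism, so $\det\D A_\lambda=\lambda^{8}\det\D A\neq0$. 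Freeness is invariant under reparametrization of the source, so $\widehat f$ is free.

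The main obstacle is the $C^2$-smallness in the cutoff step. The cubic vanishing of $E$ exactly balances the $\eta^{-2}$ blow-up of $\partial^2\chi$, so the coordinates must be chosen so that the $2$-jet matches $\widetilde P$ exactly, not merely up to an affine map in the source. Property~1 is satisfied because both the modification and $D$ live inside $\{r<2\eta\}$, and $\eta$ can be taken arbitrarily small. The phrase ``every sufficiently small $\lambda$'' means $\lambda\le\eta/2$.
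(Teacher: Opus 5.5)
Your proof is correct and follows essentially the same route as the paper. You match the $2$-jet of $f$ at $p$ with $\widetilde P$ by a target affine map, cut off to $\widetilde P$ near $p$ with a $C^2$-error linear in the cutoff scale (from the cubic vanishing of the remainder), and then graft $A_\lambda=D_\lambda\circ A$ onto the polar model $\widetilde W(2\lambda u,\theta)$. The differences are cosmetic: you decouple the cutoff scale $\eta$ from $\lambda$, and you spell out the collar identification that the paper leaves implicit. Also note that $\det\D\widetilde P=2$, not $1$, which is harmless since only nonvanishing is used.
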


\begin{proof}
Choose local coordinates \((x,y)\) centered at \(p\). Since \(f\) is free, the
matrix
\[
\D f(0)
\]
is invertible. Hence, after translating the target and applying an invertible
linear map of \(\mathbb R^5\), we may assume that
\[
j^2_0f=j^2_0\widetilde P.
\]
Thus
\[
f(x,y)=\widetilde P(x,y)+R(x,y),
\]
where
\[
\partial_\alpha R(x,y)=O\bigl(|(x,y)|^{3-|\alpha|}\bigr),
\qquad
|\alpha|\le 2.
\]

Fix \(\lambda>0\) sufficiently small. On a small annulus around the circle
\[
r=2\lambda
\]
we interpolate between \(f\) and \(\widetilde P\) by
\[
f_\lambda(x,y)
=
\widetilde P(x,y)
+
\chi(r/\lambda)\bigl(f(x,y)-\widetilde P(x,y)\bigr),
\]
where \(r=\sqrt{x^2+y^2}\), and \(\chi\) is a smooth cutoff which is \(0\) near
\(r=2\lambda\) and \(1\) farther out.

Since \(f-\widetilde P=O(r^3)\) and the derivatives of the cutoff of
order \(j\) are \(O(\lambda^{-j})\), one has
\[
\|j^2f_\lambda-j^2\widetilde P\|_{C^0}=O(\lambda)
\]
on the interpolation annulus. For \(\lambda\) sufficiently small, \(f_\lambda\)
is therefore free, since \(\widetilde P\) is free and freeness is an open
condition on \(2\)-jets.

Thus, after this first modification, the map equals the polar standard model
\[
\widetilde W(r,\theta)
=
\widetilde P(r\cos\theta,r\sin\theta)
\]
near the circle \(r=2\lambda\).

Now attach the annular interpolation \(A_\lambda\) from the previous lemma.
Near its outer end, it equals
\[
\widetilde W(2\lambda u,\theta),
\]
so near \(u=1\) it agrees with the polar standard model near \(r=2\lambda\).
Near its inner end, it equals the signed-cylinder model
\[
C_\lambda(u,\theta).
\]

Replacing the corresponding punctured neighborhood by this annular collar gives
a free map on \(M\setminus\operatorname{int}D\), equal to the original map outside
a small neighborhood of \(p\), and equal to \(C_\lambda\) near \(\partial D\).
\end{proof}

\begin{theorem}[Connected-sum stability in minimal dimension for surfaces]
\label{thm: connected sum}
Let \(M\) and \(N\) be smooth compact connected surfaces. If
\[
\Free^\infty(M,\mathbb R^5)\neq\varnothing
\qquad\text{and}\qquad
\Free^\infty(N,\mathbb R^5)\neq\varnothing,
\]
then
\[
\Free^\infty(M\#N,\mathbb R^5)\neq\varnothing.
\]
\end{theorem}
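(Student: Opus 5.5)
The plan is to apply Lemma~\ref{lem:gluing} to both surfaces with the \emph{same} parameter $\lambda$, and then glue the two resulting collars. The gluing works because the signed-cylinder model $C$ (hence $C_\lambda=D_\lambda\circ C$) is defined and free on the whole signed cylinder $(-\epsilon,\epsilon)\times S^1$, where $\det\D C=2$.

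\textbf{Step 1 (normalization).} Fix $f\in\Free^\infty(M,\mathbb R^5)$, $g\in\Free^\infty(N,\mathbb R^5)$ and points $p\in M$, $q\in N$. Lemma~\ref{lem:gluing} holds for every sufficiently small $\lambda>0$, so I choose one $\lambda$ small enough for both $f$ and $g$. This gives disks $D_M\ni p$, $D_N\ni q$ and free maps
\[
\widehat f:M\setminus\operatorname{int}D_M\to\mathbb R^5,\qquad \widehat g:N\setminus\operatorname{int}D_N\to\mathbb R^5 .
\]
In collar coordinates $(u,\theta)\in[0,\epsilon)\times S^1$ they satisfy $\widehat f=C_\lambda$ and $\widehat g=C_\lambda$. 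Each map has possibly been postcomposed with an affine automorphism of $\mathbb R^5$; this is harmless, since affine maps of the target preserve freeness.

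\textbf{Step 2 (reflection and gluing).} Form $M\#N$ by identifying the two collars into a single cylinder $(-\epsilon,\epsilon)\times S^1$. The collar of $M$ becomes $u\ge0$. The collar of $N$, with coordinate $u'$, becomes $u=-u'\le0$, with $\theta$ kept. The map $(u',\theta)\mapsto(-u',\theta)$ reverses the collar orientation, which is exactly the identification used to build a connected sum. Since $M,N$ are connected, the diffeomorphism type of $M\#N$ does not depend on the choices, so the result is $M\#N$.

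To make the two maps match, I postcompose $\widehat g$ with the linear isometry $S=\operatorname{diag}(1,1,-1,-1,1)$. A direct check shows that, for $u=-u'$,
\[
S\,C_\lambda(u',\theta)=\bigl(\lambda\cos\theta,\lambda\sin\theta,\lambda^2u\cos2\theta,\lambda^2u\sin2\theta,-\tfrac{\lambda^2u^2}{2}\bigr)=C_\lambda(u,\theta).
\]

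\textbf{Step 3 (the glued map).} Define $F$ on $M\#N$ by $\widehat f$ on the $M$-side and $S\circ\widehat g$ on the $N$-side. On the whole neck $(-\epsilon,\epsilon)\times S^1$ both prescriptions equal $C_\lambda(u,\theta)$, so $F$ is smooth. It is free on each piece, and on the neck it is free because $C_\lambda$ is free there. Freeness is independent of the choice of coordinates on the domain. No embedding is claimed, so overlaps between the images of the two sides are irrelevant. Hence $F\in\Free^\infty(M\#N,\mathbb R^5)$.

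\textbf{Main obstacle.} The delicate point is Step 2. One must make sure that the sign convention of the collar coordinate $u$ in Lemma~\ref{lem:gluing} (with $u\ge0$ pointing away from $\partial D$) is compatible with gluing via $u\mapsto-u$. One must also make sure that the reflection $S$ really converts $C_\lambda(-u,\theta)$ into $C_\lambda(u,\theta)$ without disturbing the $\theta$-dependence; the computation displayed above shows that it does. If instead a $\theta$-reversal were required, for orientation reasons, I would compose with $\theta\mapsto-\theta$ together with $\operatorname{diag}(1,-1,1,-1,1)$, which leaves $C_\lambda$ invariant.
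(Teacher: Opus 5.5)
Your proposal is correct and follows the paper's proof essentially step for step. You normalize both maps with the same $\lambda$ via Lemma~\ref{lem:gluing}, reflect one side by $S=\operatorname{diag}(1,1,-1,-1,1)$ so that $S C_\lambda(u,\theta)=C_\lambda(-u,\theta)$, and glue using the signed collar coordinate, so that the map on the neck is exactly $C_\lambda$. Your additional remark that a $\theta$-reversal can be absorbed by $\operatorname{diag}(1,-1,1,-1,1)$ is a small but welcome addition: it makes explicit the orientation choice in the gluing, which the paper only covers with ``choosing the boundary angular parameters appropriately''.
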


\begin{proof}
Choose
\[
f\in\Free^\infty(M,\mathbb R^5),
\qquad
g\in\Free^\infty(N,\mathbb R^5).
\]
Choose points \(p\in M\), \(q\in N\). Applying the signed-cylinder normalization
lemma to \(f\) and \(g\), with the same value of \(\lambda>0\), we obtain disks
\[
D_M\subset M,
\qquad
D_N\subset N,
\]
and free maps
\[
\widehat f:M\setminus\operatorname{int}D_M\to\mathbb R^5,
\qquad
\widehat g:N\setminus\operatorname{int}D_N\to\mathbb R^5,
\]
such that near the two boundary circles
\[
\widehat f(u,\theta)=C_\lambda(u,\theta),
\]
and
\[
\widehat g(u,\theta)=C_\lambda(u,\theta),
\]
where \(u\ge 0\) is the inward collar coordinate on each punctured surface.

Let
\[
S=\operatorname{diag}(1,1,-1,-1,1).
\]
Then
\[
\det S=1,
\]
so postcomposition by \(S\) preserves freeness. Moreover
\[
S C_\lambda(u,\theta)=C_\lambda(-u,\theta).
\]
Replace \(\widehat g\) by \(S\widehat g\). Then near \(\partial D_N\),
\[
S\widehat g(u,\theta)=C_\lambda(-u,\theta).
\]

Now form the connected sum \(M\#N\) by identifying the two boundary circles
using the chosen angular parameter \(\theta\). 
Choosing the boundary angular parameters appropriately realizes the desired
connected-sum identification.

Define a map
\[
h:M\#N\to\mathbb R^5
\]
by
\[
h=
\begin{cases}
\widehat f & \text{on }M\setminus\operatorname{int}D_M,\\
S\widehat g & \text{on }N\setminus\operatorname{int}D_N.
\end{cases}
\]

Near the seam, introduce a signed collar coordinate \(\tau\) by
\[
\tau=u
\quad\text{on the \(M\)-side},
\qquad
\tau=-u
\quad\text{on the \(N\)-side}.
\]
Then on both sides of the seam,
\[
h(\tau,\theta)=C_\lambda(\tau,\theta).
\]
Therefore \(h\) is smooth across the seam. It is free near the seam because
\(C_\lambda\) is free, and it is free away from the seam because it agrees locally
with either \(\widehat f\) or \(S\widehat g\), both of which are free. Hence,
\[
h\in\Free^\infty(M\#N,\mathbb R^5).
\]
\end{proof}

We are now able to prove our main result.

\begin{theorem}
    \label{thm: main}
    Let $M$ be a compact closed surface. Then $\mathrm{Free}^\infty(M,\mathbb{R}^5) \neq \emptyset$.
\end{theorem}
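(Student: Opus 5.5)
The plan is to combine the classification of closed surfaces with Theorem~\ref{thm: connected sum}, using the explicit free maps on $\RP^2$ and $\bT^2$ as base cases and inducting on the number of connected summands. We may assume $M$ is connected: for several components, free maps on each component can be combined directly, since freeness is a pointwise rank condition and nothing is required between different components.

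\emph{Base cases.} For $M=\bS^2$ we take the Veronese-type map $\hat\nu_2\in\Free^\infty(\bS^2,\bR^5)$ from the introduction. For $M=\RP^2$ we take $\nu_2$, viewed as a map into the affine chart $Y^1+Y^2+Y^3\neq0$, which is free. For $M=\bT^2$ we take the map $F_2$ of Section~\ref{sec: T2}, for which $\det\D F_2=D(y)$ never vanishes. All three maps are smooth.

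\emph{Induction.} By classification, a closed connected surface is either $\bS^2$, or $\bT^2\#\cdots\#\bT^2$ with $g\geq1$ summands, or $\RP^2\#\cdots\#\RP^2$ with $k\geq1$ summands. We induct on the number of summands. If $M=M'\#\bT^2$ (or $M=M'\#\RP^2$), where $M'$ has one fewer summand, the inductive hypothesis gives $\Free^\infty(M',\bR^5)\neq\varnothing$. The base case gives a free map on the added summand, and Theorem~\ref{thm: connected sum} then yields $\Free^\infty(M,\bR^5)\neq\varnothing$. The connected sum is well defined up to diffeomorphism, and freeness is invariant under diffeomorphisms of the domain, because reparametrization does not change the span of the first and second derivatives. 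So the free map on the model connected sum transfers to $M$.

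\emph{Main point to check.} The only delicate step is that the gluing in Theorem~\ref{thm: connected sum} really produces the intended smooth surface in the non-orientable and mixed cases. One has to verify that the identification of the boundary circles via the angular parameter $\theta$, together with the sign flip $\tau=\pm u$, gives $M\#N$ and not some other surface. Because the connected sum of connected surfaces does not depend on the choice of disks or gluing orientation, and every orientable or non-orientable closed surface arises by iterating the same operation with $\bT^2$ or $\RP^2$, I expect this to go through with no further computation. If a concern arises about orientation conventions at the seam, the fallback is to reverse $\theta\mapsto-\theta$ on one side. This is compatible with $C_\lambda$ after postcomposing with $\operatorname{diag}(1,-1,1,-1,1)$, which has determinant $1$ and therefore preserves freeness.
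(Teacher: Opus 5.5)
Your proposal is correct and follows the paper's proof essentially step for step: you reduce to connected $M$, invoke the classification of closed surfaces, take the Veronese maps on $\bS^2$ and $\RP^2$ and the explicit map $F_2$ on $\bT^2$ as base cases, and iterate Theorem~\ref{thm: connected sum}. Your additional remarks are valid and make explicit what the paper leaves implicit: freeness is invariant under diffeomorphisms of the domain, the surface connected sum does not depend on the gluing choices, and the fallback $\theta\mapsto-\theta$ works because $\operatorname{diag}(1,-1,1,-1,1)\,C_\lambda(u,\theta)=C_\lambda(u,-\theta)$.
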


\begin{proof}
Assume first that \(M\) is connected. By the classification theorem for closed
surfaces, \(M\) is diffeomorphic to exactly one of the following:
\[
\bS^2,\qquad \#_g T^2\quad (g\ge 1),\qquad \#_k \RP^2\quad (k\ge 1).
\]

We have:
\begin{itemize}
\item \(\Free^\infty(\bS^2,\mathbb R^5)\ne\varnothing\), by the Veronese map;
\item \(\Free^\infty(T^2,\mathbb R^5)\ne\varnothing\), by the explicit map above;
\item \(\Free^\infty(\RP^2,\mathbb R^5)\ne\varnothing\), by the Veronese map.
\end{itemize}

Iterated application of Theorem~\ref{thm: connected sum} gives the result for connected
\(M\). If \(M\) is disconnected, apply the connected case to each connected
component.
\end{proof}

\begin{corollary}
    \label{cor: 1}
    Let $M$ be a compact closed surface.
    Then there is a free smooth embedding of $M$ into $\bR^5$.
\end{corollary}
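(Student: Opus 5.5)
The plan is to combine Theorem~\ref{thm: main} with a generic perturbation argument. In the critical dimension $q=5=2\cdot 2+1$, a free map is an immersion of a surface into $\bR^5$. Generic immersions of a compact $2$-manifold into $\bR^5$ are injective, because $2\cdot 2<5$. Freeness is an open condition in the $C^2$ topology, since it is the nonvanishing of a continuous function of $2$-jets. So the strategy is to take a free map from Theorem~\ref{thm: main} and perturb it slightly within $\Free^\infty(M,\bR^5)$ to make it injective. An injective immersion of a compact manifold is an embedding, which gives the Corollary.

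First, fix $f\in\Free^\infty(M,\bR^5)$ given by Theorem~\ref{thm: main}. Since $M$ is compact, $|\det\D f|$ (computed in finitely many charts) is bounded below by some $\eta>0$. Hence there is $\epsilon>0$ such that every smooth $g$ with $\|g-f\|_{C^2}<\epsilon$ is still free.

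Second, show that maps $g$ of this kind which are injective are dense. I would use the standard transversality argument with a finite-dimensional family of perturbations. Choose finitely many smooth functions $\phi_1,\dots,\phi_N$ on $M$ giving an embedding $\Phi=(\phi_1,\dots,\phi_N):M\to\bR^N$ (Whitney). Then consider
\[
g_B=f+B\circ\Phi,
\]
where $B$ ranges over linear maps $\bR^N\to\bR^5$. Look at the map
\[
G:(M\times M\setminus\Delta)\times\operatorname{Hom}(\bR^N,\bR^5)\to\bR^5,\qquad G(p,q,B)=g_B(p)-g_B(q).
\]
It is a submersion, because $\Phi(p)\neq\Phi(q)$, so varying $B$ already spans $\bR^5$. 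By parametric transversality, for almost every $B$ the map $G(\cdot,\cdot,B)$ misses $0$, because the domain $M\times M\setminus\Delta$ has dimension $4<5$. So $g_B$ has no double points off the diagonal.

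Near the diagonal, injectivity is automatic for any immersion. Since $f$ is an immersion and $M$ is compact, there is a neighborhood $U$ of $\Delta$ on which $g(p)\neq g(q)$ for $p\neq q$, uniformly for all $g$ $C^1$-close to $f$. This is the point I expect to need the most care: the transversality argument only works on the noncompact set $M\times M\setminus\Delta$, so the near-diagonal region must be handled separately. I would do it by the usual local injectivity estimate $|g(p)-g(q)|\geq c\,d(p,q)$, valid uniformly on a tubular neighborhood of $\Delta$ for all $g$ $C^1$-close to $f$. On the compact complement of $U$ the transversality argument applies. Finally, choose $B$ small enough that $\|B\circ\Phi\|_{C^2}<\epsilon$. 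Then $g_B$ is free and injective, hence a free smooth embedding.

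(Alternatively, for the components one could check injectivity directly on explicit models. However, the gluing construction does not control global double points, so the perturbation route is the robust one.)
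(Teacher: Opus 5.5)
Your proposal is correct and takes essentially the same route as the paper. Freeness is a $C^2$-open condition on the compact surface, and since $5=2\cdot 2+1$, an arbitrarily small generic smooth perturbation of the free map from Theorem~\ref{thm: main} is an injective immersion, hence a free embedding; you simply make explicit the transversality argument that the paper cites as ``Whitney approximation and transversality''.

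Your separate near-diagonal estimate is harmless but not needed. Parametric transversality (Sard) does not require compactness, so it already gives $g_B(p)\neq g_B(q)$ for all $(p,q)\in M\times M\setminus\Delta$ for almost every $B$.
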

\begin{proof}
By Theorem~\ref{thm: main}, the open set \(\Free^\infty(M,\mathbb   R^5)\) is nonempty.
Since freeness is a \(C^2\)-open condition and \(M\) is compact,
\(\Free^\infty(M,\mathbb R^5)\) is open in the \(C^\infty\)-topology.

By Whitney approximation and transversality, since
$
        5=2\cdot 2+1,
$
embeddings \(M\to\mathbb R^5\) are \(C^\infty\)-dense in
\(C^\infty(M,\mathbb R^5)\). Hence any free map \(M\to\mathbb R^5\) admits a
sufficiently small smooth perturbation which is still free and is an embedding.
\end{proof}

\section*{Acknowledgments}
The author warmly thanks M. Gromov and Y. Eliashberg for their valuable comments on earlier versions of the manuscript.
The author was partially supported by NSF grant \# 2308225.

\section*{Appendix}

\appendix

\section{Sturm's method}
\label{app: Sturm}
Let \(p\in\mathbb Q[t]\) be a nonzero polynomial. Its Sturm sequence is the
finite sequence
\[
S_0=p,\qquad S_1=p',
\qquad
S_{i+1}=-\operatorname{rem}(S_{i-1},S_i),
\]
where \(\operatorname{rem}(S_{i-1},S_i)\) denotes the Euclidean remainder of
\(S_{i-1}\) upon division by \(S_i\). The construction stops when the remainder
is zero. For \(a\in\mathbb R\cup\{\pm\infty\}\), let \(V(a)\) be the number of
sign changes in the list
\[
S_0(a),S_1(a),\dots,S_N(a),
\]
after omitting zero entries. Sturm's theorem (e.g. see~\cite{BPM06}) states that the number of distinct
real roots of \(p\) in an interval \((a,b)\), assuming \(a\) and \(b\) are not
roots of \(p\), is
\[
V(a)-V(b).
\]
In particular, the total number of real roots of \(p\) is
\[
V(-\infty)-V(+\infty).
\]
Thus, if \(V(-\infty)=V(+\infty)\), then \(p\) has no real roots. If in addition
\(p(t_0)>0\) for some \(t_0\in\mathbb R\), then \(p(t)>0\) for every
\(t\in\mathbb R\); similarly, if \(p(t_0)<0\), then \(p(t)<0\) everywhere.
All computations used below are carried out exactly in \(\mathbb Q[t]\).

\section{Exact Sturm verification for the \(\bT^4\) case}
\label{app: T4}
In Section~\ref{sec: T4}, after the substitution
\[
t=\tan\frac z2,
\]
the freeness determinant takes the form
\[
D(z(t))=\frac{q(t)}{(1+t^2)^{11}},
\]
where
\[
\begin{aligned}
q(t)= {}&
-60010t^{22}-561400t^{21}-3039880t^{20}-337260t^{19}
+18967810t^{18}  \\
&+11879260t^{17}-23665760t^{16}
-17732160t^{15}-42514660t^{14}-536720t^{13} \\
&+22966960t^{12}+23893640t^{11}+32807300t^{10}
+18046840t^9-9136320t^8 \\
&-11620000t^7-19941970t^6-7234520t^5
+1406360t^4+511940t^3 \\
&-71910t^2-7540t-160 .
\end{aligned}
\]
Since \((1+t^2)^{11}>0\), it is enough to prove that \(q(t)\) does not vanish
on \(\mathbb R\).

We use the Sturm sequence
\[
S_0=q,\qquad S_1=q',
\qquad
S_{i+1}=-\operatorname{rem}(S_{i-1},S_i),
\]
computed in \(\mathbb Q[t]\). The sequence has \(23\) terms, with degrees
\[
22,21,20,\ldots,1,0.
\]
The signs of the leading terms give the following signs at \(-\infty\) and
\(+\infty\):

\[
\begin{array}{c|c|c|c}
i & \deg S_i & \operatorname{sign}S_i(-\infty)
& \operatorname{sign}S_i(+\infty)\\
\hline
0  & 22 & - & -\\
1  & 21 & + & -\\
2  & 20 & + & +\\
3  & 19 & - & +\\
4  & 18 & + & +\\
5  & 17 & - & +\\
6  & 16 & - & -\\
7  & 15 & - & +\\
8  & 14 & - & -\\
9  & 13 & + & -\\
10 & 12 & - & -\\
11 & 11 & - & +\\
12 & 10 & + & +\\
13 & 9  & + & -\\
14 & 8  & - & -\\
15 & 7  & - & +\\
16 & 6  & + & +\\
17 & 5  & + & -\\
18 & 4  & - & -\\
19 & 3  & + & -\\
20 & 2  & + & +\\
21 & 1  & + & -\\
22 & 0  & + & +
\end{array}
\]

Therefore the number of sign changes is
\[
V(-\infty)=11,
\qquad
V(+\infty)=11.
\]
By Sturm's theorem,
\[
\#\{t\in\mathbb R:q(t)=0\}
=
V(-\infty)-V(+\infty)
=
0.
\]
Thus \(q\) has no real roots.

Finally,
\[
q(0)=-160<0.
\]
Since \(q\) has no real roots, its sign is constant on \(\mathbb R\), and therefore
\[
q(t)<0
\qquad
\forall t\in\mathbb R.
\]
The point \(z=\pi\), corresponding to \(t=\infty\), is controlled by the leading
coefficient:
\[
-60010<0.
\]
Hence
\[
D(z)<0
\qquad
\forall z\in \bS^1.
\]
This proves that $\det\D F_4=D(z)$ is nowhere zero.

\section{Exact Sturm verification for the \(\bT^5\) case}
\label{app: T5}
In the \(\bT^5\) construction, after setting
\[
t=\tan\frac z2,
\]
we have
\[
\sin z=\frac{2t}{1+t^2},
\qquad
\cos z=\frac{1-t^2}{1+t^2}.
\]
The determinant \(D(z)\) can therefore be written as a rational function of \(t\).

The entries of $\D F_5$ have denominators bounded by powers of
\(1+t^2\). Multiplying each row by \((1+t^2)^2\), computing the determinant,
and then cancelling the harmless positive factor \((1+t^2)^{25}\), one obtains
\[
D(z(t))=\frac{p(t)}{(1+t^2)^{15}},
\]
where
\[
\begin{aligned}
p(t)={}&
77499567520t^{30}
+393071545000t^{29}
+624105453260t^{28}
-2123936935740t^{27} \\
&-1759943802760t^{26}
+2441233882140t^{25}
-2897450936620t^{24}
-9631185563480t^{23} \\
&+45848535094560t^{22}
-49554696590640t^{21}
-2415739796580t^{20}
+52612311592380t^{19} \\
&+111907002620280t^{18}
-437235030859820t^{17}
+509646314333860t^{16}
-506960386499280t^{15} \\
&+429448025715360t^{14}
+65367491720040t^{13}
-290352458559260t^{12}
+311545262726140t^{11} \\
&-230021273661720t^{10}
+25849442057540t^9
+67010870215740t^8
-63725239598680t^7 \\
&+30209374034080t^6
-4416239459680t^5
+651580051380t^4
-436319537660t^3 \\
&+16504531240t^2
+11135564620t
+626467660 .
\end{aligned}
\]
Since \((1+t^2)^{15}>0\), it is enough to prove that \(p(t)>0\) for all
\(t\in\mathbb R\).

We use the Sturm sequence
\[
S_0=p,\qquad S_1=p',
\qquad
S_{i+1}=-\operatorname{rem}(S_{i-1},S_i),
\]
computed exactly in \(\mathbb Q[t]\). The sequence has \(31\) terms, with degrees
\[
30,29,28,\ldots,1,0.
\]
The signs at \(-\infty\) and \(+\infty\) are as follows:
\[
\begin{array}{c|c|c|c}
i & \deg S_i & \operatorname{sign}S_i(-\infty)
& \operatorname{sign}S_i(+\infty)\\
\hline
0  & 30 & + & +\\
1  & 29 & - & +\\
2  & 28 & + & +\\
3  & 27 & + & -\\
4  & 26 & - & -\\
5  & 25 & + & -\\
6  & 24 & + & +\\
7  & 23 & + & -\\
8  & 22 & + & +\\
9  & 21 & - & +\\
10 & 20 & + & +\\
11 & 19 & - & +\\
12 & 18 & - & -\\
13 & 17 & - & +\\
14 & 16 & + & +\\
15 & 15 & + & -\\
16 & 14 & - & -\\
17 & 13 & - & +\\
18 & 12 & + & +\\
19 & 11 & + & -\\
20 & 10 & - & -\\
21 & 9  & - & +\\
22 & 8  & + & +\\
23 & 7  & + & -\\
24 & 6  & - & -\\
25 & 5  & - & +\\
26 & 4  & - & -\\
27 & 3  & + & -\\
28 & 2  & - & -\\
29 & 1  & - & +\\
30 & 0  & - & -
\end{array}
\]
Thus
\[
V(-\infty)=15,
\qquad
V(+\infty)=15.
\]
By Sturm's theorem,
\[
\#\{t\in\mathbb R:p(t)=0\}
=
V(-\infty)-V(+\infty)
=
0.
\]
Therefore \(p\) has no real roots.

Moreover,
\[
p(0)=626467660>0.
\]
Since \(p\) has no real roots, it follows that
\[
p(t)>0
\qquad
\forall t\in\mathbb R.
\]
The point \(z=\pi\), corresponding to \(t=\infty\), is controlled by the leading
coefficient:
\[
77499567520>0.
\]
Hence
\[
D(z)>0
\qquad
\forall z\in \bS^1.
\]
This proves that $\det\D F_5=D(z)$ is nowhere zero.

\bibliographystyle{unsrt}
\bibliography{refs.bib} 

@ARTICLE{EG71,
   author  = "Ya. M. Eliashberg and M. Gromov",
   year    = "1971",
   journal = "Math. USSR Izvestija",
   title   = "Removal of singularities of smooth mappings",
   volume  = "5",
   pages   = "615-639"
}

@BOOK{EM02,
   author    = "Y. Eliashberg and N. Mishachev",
   year      = "2002",
   title     = "Introduction to the $h$-Principle",
   publisher = "AMS",
   series    = "GSM",
   volume    = "48"
}

@ARTICLE{GR70,
   author  = "M. Gromov and V.A. Rokhlin",
   year    = "1970",
   journal = "Russian Math. Surveys",
   title   = "Immersions and embeddings in {R}iemannian geometry",
   volume  = "25",
   pages   = "1-57"
}

@BOOK{Gro86,
   author    = "M. Gromov",
   year      = "1986",
   title     = "Partial {D}ifferential {R}elations",
   publisher = "Springer Verlag",
}

@ARTICLE{Nas56,
   author  = "J. Nash",
   year    = "1956",
   title   = "The imbedding problem for {R}iemannian manifolds",
   journal = "Ann. of Math.",
   volume  = "63:1",
   pages   = "20-63"
}

@article{Gro17,
  title={Geometric, algebraic, and analytic descendants of {N}ash isometric embedding theorems},
  author={Gromov, M.},
  journal={Bulletin of the American Mathematical Society},
  volume={54},
  number={2},
  pages={173--245},
  year={2017}
}

@article{Cos90,
  title={On closed twisted curves},
  author={Rodriguez Costa, S.I.},
  journal={Proceedings of the American Mathematical Society},
  volume={109},
  number={1},
  pages={205--214},
  year={1990}
}

@book{BPM06,
  title={Algorithms in real algebraic geometry},
  author={Basu, S. and Pollack, R. and Roy, M.F.},
  year={2006},
  publisher={Springer}
}

\end{document}